\newtheorem{theorem}{Theorem}
\newtheorem{lemma}[theorem]{Lemma}
\newtheorem{proposition}[theorem]{Proposition}
\newtheorem{corollary}[theorem]{Corollary}
\newtheorem{definition}[theorem]{Definition}
\newtheorem{remark}[theorem]{Remark}
\theoremstyle{plain}
\begin{document}

\title[On Continuously Differentiable Vector-Valued Functions]{On Continuously Differentiable Vector-Valued Functions of Non-Integer Order}

%%%%%%%%%%%%%%%%%%%%%%%%%%%

\author[P. M. Carvalho-Neto]{Paulo M. de Carvalho-Neto}
\address[Paulo M. de Carvalho Neto]{Departamento de Matem\'atica, Centro de Ci\^{e}ncias F\'{i}sicas e Matem\'aticas, Universidade Federal de Santa Catarina, Florian\'{o}polis - SC, Brazil}
\email[]{paulo.carvalho@ufsc.br}
\author[R. Fehlberg J\'{u}nior]{Renato Fehlberg J\'{u}nior}
\address[Renato Fehlberg J\'{u}nior]{Departamento de Matem\'atica, Universidade Federal do Esp\'{i}rito Santo, Vit\'{o}ria - ES, Brazil}
\email[]{renato.fehlberg@ufes.br}

%%%%%%%%%%%%%%%%%%%%%%%%%%%

\subjclass[2020]{26A33, 26A16}

%%%%%%%%%%%%%%%%%%%%%%%%%%%

\keywords{Caputo fractional derivative, Riemann-Liouville fractional derivative, Hölder spaces.}

%%%%%%%%%%%%%%%%%%%%%%%%%%%

\begin{abstract}
%%%
The function spaces of continuously differentiable functions are extensively studied and appear in various mathematical settings. In this context, we investigate the spaces of continuously fractional differentiable functions of order $\alpha>0$, considering both the Riemann-Liouville and Caputo fractional derivatives. We explore several fundamental properties of these spaces and, inspired by a result of Hardy and Littlewood, we compare them with the space of Hölder continuous functions. Our main objective is to establish a rigorous theoretical framework to support the study and further advancement of this subject.
%%%
\end{abstract}

\maketitle

\section{Introduction}

Since its inception, fractional calculus has attracted the attention of many mathematicians and researchers from various scientific fields, due to both its rich theory, offering nuances distinct from classical calculus, and its wide range of applications (for a historical overview of fractional calculus, see \cite{OlSp1,Ro1}, and for classical results, refer to \cite{SaKiMa1} and the references therein).

A crucial contribution to the field was made by Hardy and Littlewood, who were among the first to prove fundamental results regarding the continuity of the Riemann-Liouville fractional integral of order $\alpha>0$; see \cite{HaLi1} for details. Their studies inspired numerous mathematicians to further investigate this operator, leading to a vast body of research (see, for instance, a list of references in \cite{CarFe0}).

Although many researchers study various topics in fractional calculus, several fundamental questions in the theory still require deeper investigation. In recent years, the authors have worked on addressing some of these gaps. For instance, a recent series of papers examined the Riemann-Liouville fractional integral as an operator on Bochner-Lebesgue spaces (see \cite{CarFe0, CarFe2, CarFe3, CarFe4}).

A natural continuation of these studies is the connection between the continuity of fractional derivatives of continuous functions and differentiable spaces and its relationship with Hölder spaces. The regularity of fractional derivatives is closely tied to these spaces, and various works have discussed this relationship (see for instance \cite{AlBaMk1, Bo1, KaGi1, KaSh1, RoStSa1}).

This work is organized as follows: in Section \ref{sec2}, we provide the necessary background and definitions. In Section \ref{sec3}, we introduce the function spaces of fractional continuously differentiable functions and study their interrelations. Finally, in Section \ref{sec4}, we discuss their regularity, connections with Hölder spaces, and show that these spaces form Banach algebras.

\section{Preliminary Concepts}\label{sec2}

In this section, we introduce the definitions of the key objects used throughout this manuscript and establish some of their fundamental properties. We assume that $t_0 < t_1$ are fixed real numbers and that $X$ is a Banach space.

An in-depth discussion of the Bochner-Lebesgue spaces $L^p(t_0,t_1;X)$ can be found in \cite{CarFe0} and the references therein. For further details on the space of continuously differentiable functions $C^n([t_0,t_1];X)$ and the Bochner-Sobolev spaces $W^{1,p}(t_0,t_1;X)$, we refer to \cite{CaHa1}.

\begin{definition}
Consider $\alpha \in (0,\infty)$ and a function $f:[t_0,t_1] \to X$.
\begin{itemize}
\item[(i)] The Riemann-Liouville (RL) fractional integral of order $\alpha$ at $t_0$ is defined as
\begin{equation}\label{fracinit}
J_{t_0,t}^\alpha f(t) := \frac{1}{\Gamma(\alpha)} \int_{t_0}^{t} (t-s)^{\alpha-1} f(s) \,ds,
\end{equation}
for every $t \in [t_0,t_1]$ where the integral \eqref{fracinit} exists. Here, $\Gamma$ denotes the Euler gamma function.\vspace*{0.2cm}
\item[(ii)] The Riemann-Liouville (RL) fractional derivative of order $\alpha$ at $t_0$ is given by
\begin{equation}\label{202502231625}
D_{t_0,t}^\alpha f(t) := \frac{d^{[\alpha]}}{dt^{[\alpha]}} \left[J_{t_0,t}^{[\alpha]-\alpha} f(t)\right],
\end{equation}
for every $t \in [t_0,t_1]$ where the right-hand side of \eqref{202502231625} exists. Here, and throughout this work, $[\alpha]$ denotes the smallest integer greater than or equal to $\alpha$. The derivative $d^{[\alpha]}/dt^{[\alpha]}$ is understood in the weak sense. \vspace*{0.2cm}
\item[(iii)] The Caputo fractional derivative of order $\alpha$ at $t_0$ is given by
\begin{equation}\label{202502231626}
cD_{t_0,t}^\alpha f(t) := D_{t_0,t}^\alpha \left[ f(t) - \sum_{j=0}^{[\alpha]-1} \frac{f^{(j)}(t_0)}{j!} (t-t_0)^j \right],
\end{equation}
for every $t \in [t_0,t_1]$ where the right-hand side of \eqref{202502231626} exists.
\end{itemize}
\end{definition}

\begin{remark}\label{eqre01} Let us begin by pointing out some classical facts concerning the fractional operators introduced above.
\begin{itemize}
\item[(i)] For $p\in[0,\infty]$, $J_{t_0,t}^\alpha:L^p(t_0,t_1;X)\rightarrow L^p(t_0,t_1;X)$ is a bounded operator. See \cite[Theorem 3.1]{CarFe0}.\vspace*{0.2cm}
\item[(ii)] Assume that $p=\infty$ and $\alpha>0$ or that $p\in(1,\infty)$ and $\alpha\in(1/p,\infty)$. Then, if $f \in L^p(t_0,t_1;X)$, we have $J_{t_0,t}^\alpha f \in C([t_0,t_1];X)$. For details, see {\color{blue}\cite[Theorems 7]{CarFe4}}.\vspace*{0.2cm}
\item[(iii)]For any $f\in L^1(t_0,t_1;X)$, we have that
\begin{equation*}\lim_{\alpha\rightarrow0^+}{\big\|J_{t_0,t}^\alpha f-f\big\|_{L^1(t_0,t_1;X)}}=0.\end{equation*}
Therefore, we define $J_{t_0,t}^0 f(t):=f(t),$ for every $f\in L^1(t_0,t_1;X)$ (cf. \cite[Theorem 3.10]{CarFe0}). \vspace*{0.2cm}
\item[(iv)]  As a consequence of item $(iii)$, if $\alpha \in \mathbb{N}$, both $D_{t_0,t}^\alpha$ and $cD_{t_0,t}^\alpha$ correspond to the $\alpha$-th weak derivative of a function.
\end{itemize}
\end{remark}

There are several other important properties related to these operators. Therefore, from now on, to keep this manuscript as concise and self-contained as possible, we only prove results for which we are unaware of any satisfactory existing proof. Otherwise, we provide full citations, allowing readers to locate the reference for the stated fact.

\begin{proposition}\label{auxkilbas1} Let $\alpha,\beta\in(0,\infty)$ be fixed real numbers.
\begin{enumerate}
\item[(i)] (\cite[Theorem 3.15]{CarFe0}) For any $f\in L^1(t_0,t_1;X)$ it holds that
$$J_{t_0,t}^{\alpha+\beta} f(t)=J_{t_0,t}^{\alpha}\left[J_{t_0,t}^{\beta} f(t)\right],$$
for almost every $t\in [t_0,t_1]$. \vspace*{0.2cm}
\item[(ii)] Let $m\in\mathbb{N}^*:=\{1,2,\ldots\}$. If $f\in W^{m,1}\big([t_0,t_1];X\big)$ then
$$J_{t_0,t}^{\alpha} f(t)=J_{t_0,t}^{\alpha+m} f^{(m)}(t)+\sum_{j=0}^{m-1}\left[\dfrac{(t-t_0)^{\alpha+j}f^{(j)}(t_0)}{\Gamma(\alpha+j+1)}\right],$$
for almost every $t\in [t_0,t_1]$. \vspace*{0.2cm}
\item[(iii)] Let $m\in\mathbb{N}^*:=\{1,2,\ldots\}$. Assume that $f\in W^{m,1}\big([t_0,t_1];X\big)$, with $f^{(j)}(t_0)=0$ for every $j\in\{0,1,\ldots,m-1\}$. Then,
$$\dfrac{d^{m}}{dt^{m}}\bigg[J_{t_0,t}^{\alpha} f(t)\bigg]=J_{t_0,t}^{\alpha} f^{(m)}(t),$$
for almost every $t\in [t_0,t_1]$. \vspace*{0.2cm}
\end{enumerate}
\end{proposition}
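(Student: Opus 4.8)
The plan is to establish parts (ii) and (iii), since part (i) is quoted directly from \cite{CarFe0}. For part (ii) I would induct on $m\in\mathbb{N}^*$ and reduce everything to the base case $m=1$, which is an integration-by-parts identity. For $f\in W^{1,1}([t_0,t_1];X)$, its absolutely continuous representative satisfies the fundamental theorem of calculus, and taking $v(s)=-(t-s)^\alpha/\alpha$ as an antiderivative of the kernel $(t-s)^{\alpha-1}$ I would integrate by parts in
\[
\Gamma(\alpha)\,J_{t_0,t}^\alpha f(t)=\int_{t_0}^t (t-s)^{\alpha-1}f(s)\,ds.
\]
The boundary contribution at $s=t$ vanishes since $\alpha>0$, the contribution at $s=t_0$ produces the term $(t-t_0)^\alpha f(t_0)/\Gamma(\alpha+1)$ after using $\alpha\,\Gamma(\alpha)=\Gamma(\alpha+1)$, and the remaining integral is precisely $J_{t_0,t}^{\alpha+1}f'(t)$; this is the asserted formula for $m=1$.

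For the inductive step I would assume the identity for some $m$, take $f\in W^{m+1,1}([t_0,t_1];X)$ (so that $f^{(m)}\in W^{1,1}$), and apply the base case with exponent $\alpha+m$ to the single term $J_{t_0,t}^{\alpha+m}f^{(m)}(t)$. This replaces that term by $J_{t_0,t}^{\alpha+m+1}f^{(m+1)}(t)+(t-t_0)^{\alpha+m}f^{(m)}(t_0)/\Gamma(\alpha+m+1)$, and absorbing the new summand as the $j=m$ term of the sum yields exactly the formula with $m+1$ in place of $m$.

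Part (iii) then follows quickly from part (ii). Under the hypotheses $f^{(j)}(t_0)=0$ for $j=0,\dots,m-1$, the finite sum in part (ii) vanishes, leaving $J_{t_0,t}^\alpha f(t)=J_{t_0,t}^{\alpha+m}f^{(m)}(t)$ for a.e.\ $t$. Since $f^{(m)}\in L^1(t_0,t_1;X)$, Remark \ref{eqre01}(i) gives $J_{t_0,t}^\alpha f^{(m)}\in L^1(t_0,t_1;X)$, and the semigroup property in part (i) lets me factor $J_{t_0,t}^{\alpha+m}f^{(m)}=J_{t_0,t}^m\big[J_{t_0,t}^\alpha f^{(m)}\big]$. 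As $J_{t_0,t}^m$ is the $m$-fold iterated integral (Cauchy's repeated-integration formula), its $m$-th weak derivative recovers the integrand, i.e.\ $\tfrac{d^m}{dt^m}J_{t_0,t}^m h=h$ a.e.\ for every $h\in L^1(t_0,t_1;X)$; taking $h=J_{t_0,t}^\alpha f^{(m)}$ gives the claim.

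The step I expect to be the main obstacle is the rigorous justification of the integration by parts in the base case, because for $\alpha\in(0,1)$ the kernel $(t-s)^{\alpha-1}$ is singular at $s=t$ and $f$ is a vector-valued Bochner-integrable function. I would resolve this by integrating over $[t_0,t-\varepsilon]$, where integration by parts of a scalar $C^1$ weight against a $W^{1,1}$ vector-valued function is unproblematic, and then letting $\varepsilon\to0^+$: the truncated boundary term at $s=t-\varepsilon$ is controlled by $\varepsilon^\alpha\|f(t-\varepsilon)\|/\alpha\to0$ (using continuity of the representative), while the dominated convergence theorem in the Bochner setting, with dominating function $(t-t_0)^\alpha\|f'(\cdot)\|\in L^1$, handles the passage to the limit in the surviving integral.
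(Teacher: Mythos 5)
Your proposal is correct, but part (ii) is argued by a genuinely different route than the paper's. The paper never integrates by parts: it invokes the semigroup property of item (i) to write
\begin{equation*}
J_{t_0,t}^{\alpha+m} f^{(m)}(t) \;=\; J_{t_0,t}^{\alpha+m-1}\Big[J_{t_0,t}^{1} f^{(m)}(t)\Big] \;=\; J_{t_0,t}^{\alpha+m-1}\Big[f^{(m-1)}(t)-f^{(m-1)}(t_0)\Big],
\end{equation*}
where the second equality is just the fundamental theorem of calculus for the absolutely continuous representative; evaluating $J_{t_0,t}^{\alpha+m-1}$ on the constant $f^{(m-1)}(t_0)$ produces one term of the boundary sum, and iterating peels off the remaining ones. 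In particular, the step you flagged as your main obstacle --- rigorously justifying integration by parts against the singular kernel $(t-s)^{\alpha-1}$ --- simply does not arise in the paper's argument: your base case $m=1$ follows in one line from $J_{t_0,t}^{\alpha+1}f' = J_{t_0,t}^{\alpha}\big[J_{t_0,t}^{1}f'\big] = J_{t_0,t}^{\alpha}\big[f-f(t_0)\big]$, so the $\varepsilon$-truncation and dominated-convergence machinery, while correctly executed, is avoidable. What your route buys is self-containedness: you reprove the needed consequence of the semigroup law from first principles, which would matter if item (i) were not already available; what the paper's route buys is brevity, with all singular-kernel analysis delegated to the cited semigroup theorem of \cite{CarFe0}. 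For part (iii) the paper says only that it ``follows directly from item (ii)'', and your explicit chain --- the sum vanishes, $J_{t_0,t}^{\alpha+m}f^{(m)} = J_{t_0,t}^{m}\big[J_{t_0,t}^{\alpha}f^{(m)}\big]$ by item (i), and $\tfrac{d^m}{dt^m}J_{t_0,t}^{m}h = h$ a.e.\ for $h\in L^1(t_0,t_1;X)$ --- is precisely the intended argument, correctly fleshed out.
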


\begin{proof} To prove item $(ii)$, note that item $(i)$ guarantees the identity
\begin{equation*}
J_{t_0,t}^{\alpha+m} f^{(m)}(t) = J_{t_0,t}^{\alpha+m-1} \Big[J_{t_0,t}^{1} f^{(m)}(t)\Big]
= J_{t_0,t}^{\alpha+m-1} \Big[f^{(m-1)}(t) - f^{(m-1)}(t_0)\Big],
\end{equation*}
for almost every $t \in [t_0,t_1]$. Applying this iteratively, we obtain the desired result.

Finally, item $(iii)$ follows directly from item $(ii)$.
\end{proof}

We conclude this section by presenting a final result that establishes crucial relationships between the fractional operators, which are used recursively throughout this manuscript.

\begin{theorem}\label{auxkilbas2} Consider $m\in\mathbb{N}^*$, $\alpha\in(m-1,m)$, and assume that $f\in C^{m-1}\big([t_0,t_1];X\big)$.
\begin{enumerate}
    \item[(i)] (\cite[Proposition 4.6]{CarFe01}) If $m=1$, then $J_{t_0,t}^{1-\alpha} f(t)\big|_{t=t_0}=0$.\vspace*{0.2cm}
    \item[(ii)] For $m>1$, if $J_{t_0,t}^{m-\alpha} f\in C^{m-1}\big([t_0,t_1];X\big)$, then
    $$f^{(j)}(t_0)= 0\qquad\text{and}\qquad\dfrac{d^{j}}{dt^{j}}\Big[J_{t_0,t}^{m-\alpha} f(t)\Big]\bigg|_{t=t_0}=0,$$
    for every $j\in\{0,\dots,m-2\}$. Moreover,
    $$\dfrac{d^{m-1}}{dt^{m-1}}\Big[J_{t_0,t}^{m-\alpha} f(t)\Big]\bigg|_{t=t_0}=0.$$
   \item[(iii)] If $J_{t_0,t}^{m-\alpha} f\in C^{m}\big([t_0,t_1];X\big)$, then
    $$J_{t_0,t}^{\alpha}\Big[D_{t_0,t}^\alpha f(t)\Big]=f(t),\quad \text{for every } t\in [t_0,t_1].$$
\end{enumerate}
\end{theorem}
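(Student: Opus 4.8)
The plan is to reduce the claim to two ingredients already available: the boundary vanishing conditions supplied by items (i) and (ii), and the composition rules of Proposition \ref{auxkilbas1}. Throughout, write $g := J_{t_0,t}^{m-\alpha} f$, so that by hypothesis $g \in C^{m}\big([t_0,t_1];X\big)$ and, by definition \eqref{202502231625}, $D_{t_0,t}^\alpha f = g^{(m)}$ (recall $[\alpha]=m$ since $\alpha\in(m-1,m)$). The goal is therefore to prove that $J_{t_0,t}^\alpha g^{(m)}(t) = f(t)$ for every $t\in[t_0,t_1]$.

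First I would record the boundary data for $g$. If $m=1$, item (i) gives $g(t_0)=0$. If $m>1$, the hypothesis $g\in C^m\subset C^{m-1}$ lets me invoke item (ii), which yields $g^{(j)}(t_0)=0$ for all $j\in\{0,\dots,m-1\}$. In either case $g\in C^{m}\subset W^{m,1}\big([t_0,t_1];X\big)$ satisfies $g^{(j)}(t_0)=0$ for $j=0,\dots,m-1$, which is precisely the hypothesis of Proposition \ref{auxkilbas1}(iii).

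Next, applying Proposition \ref{auxkilbas1}(iii) to $g$ with the exponent $\alpha$ gives
\[
\frac{d^{m}}{dt^{m}}\big[J_{t_0,t}^\alpha g(t)\big] = J_{t_0,t}^\alpha g^{(m)}(t) = J_{t_0,t}^\alpha\big[D_{t_0,t}^\alpha f(t)\big],
\]
for almost every $t$. On the other hand, the semigroup law of Proposition \ref{auxkilbas1}(i) (valid since $m-\alpha\in(0,1)$) identifies $J_{t_0,t}^\alpha g = J_{t_0,t}^\alpha J_{t_0,t}^{m-\alpha} f = J_{t_0,t}^{m} f$, the classical $m$-fold iterated integral of the continuous function $f$. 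Since $\tfrac{d^{m}}{dt^{m}} J_{t_0,t}^{m} f = f$ by the Cauchy formula for repeated integration (equivalently, by iterating the fundamental theorem of calculus, using that $J_{t_0,t}^m f\in C^m$ when $f$ is continuous), combining the two displays yields $J_{t_0,t}^\alpha[D_{t_0,t}^\alpha f] = f$ almost everywhere.

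Finally, I would upgrade this to an everywhere identity. As $g\in C^m$, the function $D_{t_0,t}^\alpha f = g^{(m)}$ is continuous, hence bounded, so Remark \ref{eqre01}(ii) in the case $p=\infty$ shows $J_{t_0,t}^\alpha[D_{t_0,t}^\alpha f]\in C\big([t_0,t_1];X\big)$; moreover $f\in C^{m-1}\subset C\big([t_0,t_1];X\big)$. Two continuous functions that agree almost everywhere on $[t_0,t_1]$ coincide at every point, which gives the stated conclusion. I expect the only delicate point to be the bookkeeping of the boundary conditions, that is, invoking item (i) for $m=1$ and item (ii) for $m>1$ to certify the hypothesis of Proposition \ref{auxkilbas1}(iii); the remainder is a direct assembly of that composition rule, the semigroup law, and the continuity of the fractional integral.
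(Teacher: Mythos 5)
Your treatment of item (iii) is correct and is essentially the paper's own argument: set $g:=J_{t_0,t}^{m-\alpha}f$, feed the boundary conditions $g^{(j)}(t_0)=0$, $j\in\{0,\dots,m-1\}$, into Proposition \ref{auxkilbas1}(iii), use the semigroup law to identify $J_{t_0,t}^{\alpha}g=J_{t_0,t}^{m}f$, and conclude via $\tfrac{d^m}{dt^m}J_{t_0,t}^{m}f=f$; your explicit upgrade from an almost-everywhere identity to an everywhere identity (continuity of $J_{t_0,t}^{\alpha}[D_{t_0,t}^{\alpha}f]$ via Remark \ref{eqre01}(ii)) is a refinement the paper leaves implicit. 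However, there is a genuine gap: the statement you were asked to prove includes item (ii) — only item (i) carries an external citation — and your proposal never proves it. You treat the vanishing $g^{(j)}(t_0)=0$ for $j\le m-1$ as an ``already available'' ingredient, but for $m>1$ this is exactly the nontrivial content of the theorem, and your entire chain for item (iii) silently rests on it. Moreover, the companion conclusion $f^{(j)}(t_0)=0$ for $j\in\{0,\dots,m-2\}$, which is part of item (ii), is not addressed anywhere in the proposal.

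The missing argument is not a formality. For $m=2$ the paper derives it by combining Proposition \ref{auxkilbas1}(ii),
\begin{equation*}
J_{t_0,t}^{2-\alpha} f(t) = J_{t_0,t}^{3-\alpha} f^{\prime}(t) + \frac{(t-t_0)^{2-\alpha} f(t_0)}{\Gamma(3-\alpha)},
\end{equation*}
with a differentiation of both sides, obtaining
\begin{equation*}
\frac{d}{dt}\Big[J_{t_0,t}^{2-\alpha} f(t)\Big] - J_{t_0,t}^{2-\alpha} f^{\prime}(t) = \frac{(t-t_0)^{1-\alpha} f(t_0)}{\Gamma(2-\alpha)},
\end{equation*}
and then observing that the left-hand side is bounded on $[t_0,t_1]$ (the hypothesis $J_{t_0,t}^{2-\alpha}f\in C^{1}$ together with Remark \ref{eqre01}(ii)) while $(t-t_0)^{1-\alpha}$ blows up as $t\to t_0^{+}$ because $1-\alpha<0$; this forces $f(t_0)=0$, after which item (i) yields $\tfrac{d}{dt}[J_{t_0,t}^{2-\alpha}f(t)]\big|_{t=t_0}=J_{t_0,t}^{2-\alpha}f^{\prime}(t)\big|_{t=t_0}=0$, and the general case follows by iterating this scheme. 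This boundedness-versus-singularity mechanism, which simultaneously produces $f^{(j)}(t_0)=0$ and the vanishing of the derivatives of $J_{t_0,t}^{m-\alpha}f$ at $t_0$, has no counterpart in your proposal; until it is supplied, your proof establishes item (iii) only conditionally on item (ii), and hence does not prove the theorem as stated.
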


\begin{proof} To prove item $(ii)$, first assume that $m=2$. Notice that item $(i)$ allows us to conclude that
$$
J_{t_0,t}^{2-\alpha} f(t)\Big|_{t=t_0} = J_{t_0,t}^{1-(\alpha-1)} f(t)\Big|_{t=t_0} = 0.
$$

Now, item $(ii)$ of Proposition \ref{auxkilbas1} guarantees that
\begin{equation}\label{202502251200}
  J_{t_0,t}^{2-\alpha} f(t) = J_{t_0,t}^{2-\alpha+1} f^{\prime}(t) + \frac{(t-t_0)^{2-\alpha} f(t_0)}{\Gamma(3-\alpha)},
\end{equation}

for every $t \in [t_0,t_1]$. Differentiating both sides of \eqref{202502251200}, we obtain
\begin{equation*}
\frac{d}{dt} \Big[J_{t_0,t}^{2-\alpha} f(t)\Big] - J_{t_0,t}^{2-\alpha} f^{\prime}(t) = \frac{(t-t_0)^{1-\alpha} f(t_0)}{\Gamma(2-\alpha)},
\end{equation*}
for every $t \in (t_0,t_1]$. Since the left-hand side of the above identity is a bounded function in $[t_0,t_1]$ (see item $(ii)$ of Remark \ref{eqre01}) and $1-\alpha < 0$, we conclude that $f(t_0) = 0$.

Using this result, along with \eqref{202502251200} and item $(i)$, and noting that $f^\prime(t)$ is continuous in $[t_0,t_1]$, we obtain
$$
\frac{d}{dt}\Big[J_{t_0,t}^{2-\alpha} f(t)\Big]\bigg|_{t=t_0} = J_{t_0,t}^{2-\alpha} f^{\prime}(t)\Big|_{t=t_0} = J_{t_0,t}^{1-(\alpha-1)} f^{\prime}(t)\Big|_{t=t_0} = 0.
$$
Following the same reasoning, we obtain the general case.

Finally, to prove item $(iii)$, observe that items $(i)$ and $(ii)$ of this theorem ensure, for every $j\in\{0,\cdots,m-1\}$, that
$$\dfrac{d^{(j)}}{dt^{(j)}}\Big[J_{t_0,t}^{m-\alpha} f(t)\Big]\bigg|_{t=t_0}=0.$$

Therefore, items $(i)$ and $(iii)$ of Proposition \ref{auxkilbas1} guarantee the identity
$$J_{t_0,t}^{\alpha}\bigg[D_{t_0,t}^{\alpha} f(t)\bigg]=J_{t_0,t}^{\alpha}\bigg\{\dfrac{d^{m}}{dt^{m}}\Big[J_{t_0,t}^{m-\alpha} f(t)\Big]\bigg\}=\dfrac{d^{m}}{dt^{m}}\bigg\{J_{t_0,t}^{\alpha}\Big[J_{t_0,t}^{m-\alpha}f(t)\Big]\bigg\}=f(t),$$
for every $t\in[t_0,t_1]$.

\end{proof}

%%%%%%%%%%%%%%%%%%%%%%%%%%%%%%%%%%%%%%%%%%%%%%%%%%%%%%%%%%%%%%%%%%%%%%%%%%%%%%%%%%%%%%%%%%%%%%%%%%%%%%%%%%%%%%%%%%%%%%%%%%%%%%%%%%%%%%%%%%%%%%%%%%%%%%%%%%%%%%%%%%%%%%%%%%%%%%%%%%%%%%%%%%%%%%%%%%%%%%%%%%%%%

\section{The spaces of fractional continuously differentiable functions}\label{sec3}

In this section, we introduce the spaces of functions with continuous fractional derivatives, denoted by $RL^\alpha([t_0,t_1];X)$ and $C^\alpha([t_0,t_1];X)$. We establish some fundamental properties of these spaces and explore the relationships between them.

\begin{definition} Let $\alpha\in(0,\infty)$ and consider the following function spaces:
\begin{itemize}
\item[(i)] the RL $\alpha-$times continuously differentiable functions in $[t_0,t_1]$ is the set
$$RL^\alpha\big([t_0,t_1],X\big):=\Big\{f\in C^{[\alpha]-1}([t_0,t_1],X): D^\alpha_{t_0,t}f\in C^0([t_0,t_1],X)\Big\}.$$
\item[(ii)] the Caputo $\alpha-$times continuously differentiable functions in $[t_0,t_1]$ is the set
$$C^\alpha\big([t_0,t_1],X\big):=\Big\{f\in C^{[\alpha]-1}([t_0,t_1],X): cD^\alpha_{t_0,t}f\in C^0([t_0,t_1],X)\Big\}.$$
\end{itemize}
\end{definition}

\begin{remark}\label{202502272337}
%\begin{enumerate}
From item $(iv)$ of Remark \ref{eqre01}, it follows that for $\alpha\in\mathbb{N}^*$, the spaces $C^\alpha\big([t_0,t_1],X\big)$ and $RL^\alpha\big([t_0,t_1],X\big)$ coincide with the classical space of $\alpha$-times continuously differentiable functions.\vspace*{0.2cm}

 % \item[(ii)] If $f\in RL^\alpha\big([t_0,t_1],X\big)$, the condition $D^\alpha_{t_0,t}f\in C^0\big([t_0,t_1],X\big)$ implies that $J^{[\alpha]-\alpha}_{t_0,t}f\in C^{[\alpha]}\big([t_0,t_1],X\big)$.
%\end{enumerate}

\end{remark}

Besides being natural generalizations of the space of continuously differentiable functions, the spaces defined above are more subtle than they may appear. Below, we present our main results concerning these fractional differentiability spaces.

\begin{proposition}\label{nulo}
  Consider $m\in\mathbb{N}^*$ and $\alpha\in(m-1,m)$. If $f\in RL^\alpha\big([t_0,t_1],\mathbb{R}\big)$, then $f^{(j)}(t_0)= 0$ for every $j\in\{0,\cdots,m-1\}$.
\end{proposition}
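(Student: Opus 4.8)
The plan is to invert the derivative: I would show that $f$ equals the fractional integral of its own Riemann--Liouville derivative, $f=J^\alpha_{t_0,t}\big[D^\alpha_{t_0,t}f\big]$, and then extract the boundary data of $f$ and of its derivatives from the smoothing of $J^\alpha_{t_0,t}$ near $t_0$. Since $\alpha\in(m-1,m)$ is not an integer, $[\alpha]=m$, so writing $g:=D^\alpha_{t_0,t}f$, the hypothesis $f\in RL^\alpha([t_0,t_1],\mathbb{R})$ means precisely that $f\in C^{m-1}$ and that $g=\tfrac{d^m}{dt^m}\big[J^{m-\alpha}_{t_0,t}f\big]\in C^0([t_0,t_1],\mathbb{R})$, with the derivative taken in the weak sense; in particular $g$ is bounded.

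The decisive step is to check the hypothesis of Theorem \ref{auxkilbas2}(iii), namely that $J^{m-\alpha}_{t_0,t}f\in C^m([t_0,t_1],\mathbb{R})$. On one hand, since $f\in C^{m-1}\subset L^\infty$ and $m-\alpha>0$, item (ii) of Remark \ref{eqre01} gives $J^{m-\alpha}_{t_0,t}f\in C^0$. On the other hand, its $m$-th weak derivative is the continuous function $g$, so the difference $J^{m-\alpha}_{t_0,t}f-J^m_{t_0,t}g$ is a distribution whose $m$-th derivative vanishes; hence it agrees almost everywhere with a polynomial of degree at most $m-1$. As $J^m_{t_0,t}g\in C^m$ (because $g$ is continuous) and $J^{m-\alpha}_{t_0,t}f$ is continuous, this identity holds pointwise and places $J^{m-\alpha}_{t_0,t}f$ in $C^m$. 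I expect this upgrade---from ``the $m$-th weak derivative is continuous'' to genuine $C^m$-regularity---to be the main obstacle, since it is the one point where the weak-derivative structure of $D^\alpha_{t_0,t}$ must be handled carefully. Once it is secured, Theorem \ref{auxkilbas2}(iii) yields $f(t)=J^\alpha_{t_0,t}g(t)$ for every $t\in[t_0,t_1]$.

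Finally I would differentiate this representation. Fix $j\in\{0,\dots,m-1\}$; then $\alpha-j>0$, and Proposition \ref{auxkilbas1}(i) gives $J^\alpha_{t_0,t}g=J^{j}_{t_0,t}\big[J^{\alpha-j}_{t_0,t}g\big]$ almost everywhere. Both sides are continuous---the right-hand side because it is the $j$-fold iterated integral of $J^{\alpha-j}_{t_0,t}g$, which is itself continuous by Remark \ref{eqre01}(ii)---so the identity holds everywhere and, differentiating $j$ times, $f^{(j)}=J^{\alpha-j}_{t_0,t}g$ on $[t_0,t_1]$. Since $g$ is bounded and $\alpha-j>0$, the estimate $\big|J^{\alpha-j}_{t_0,t}g(t)\big|\le\|g\|_{\infty}(t-t_0)^{\alpha-j}/\Gamma(\alpha-j+1)$ shows that $J^{\alpha-j}_{t_0,t}g(t)\to 0$ as $t\to t_0^+$, whence $f^{(j)}(t_0)=0$. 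As $j$ ranges over $\{0,\dots,m-1\}$ this is exactly the claim. One could instead invoke Theorem \ref{auxkilbas2}(ii) to dispatch the cases $j\le m-2$ and treat only $j=m-1$ separately, but the representation $f=J^\alpha_{t_0,t}g$ handles all orders at once.
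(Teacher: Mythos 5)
Your proof is correct, and it shares the paper's central pivot---both invert the derivative through Theorem \ref{auxkilbas2}(iii)---but the two arguments diverge in how they organize the case $m>1$ and in what they make explicit. The paper treats $m=1$ essentially as you treat $j=0$: it obtains $f=J^{\alpha}_{t_0,t}\big[D^{\alpha}_{t_0,t}f\big]$ and evaluates at $t_0$ via Theorem \ref{auxkilbas2}(i), which is your bound $\|J^{\alpha}_{t_0,t}g(t)\|\le \|g\|_{\infty}(t-t_0)^{\alpha}/\Gamma(\alpha+1)$ in disguise. For $m>1$, however, the paper never differentiates the representation $f=J^{\alpha}_{t_0,t}g$: it first gets $f^{(j)}(t_0)=0$ for $j\le m-2$ from Theorem \ref{auxkilbas2}(ii), then uses Proposition \ref{auxkilbas1}(iii) to rewrite $D^{\alpha}_{t_0,t}f=D^{\alpha-m+1}_{t_0,t}f^{(m-1)}$, so that $f^{(m-1)}\in RL^{\alpha-m+1}([t_0,t_1],\mathbb{R})$ and the top order reduces to the $m=1$ case. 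Your route---the semigroup identity $f=J^{j}_{t_0,t}\big[J^{\alpha-j}_{t_0,t}g\big]$, upgraded from a.e.\ to pointwise by continuity, then differentiated $j$ times to give $f^{(j)}=J^{\alpha-j}_{t_0,t}g$---handles all orders uniformly and yields in passing the quantitative decay $f^{(j)}(t)=O\big((t-t_0)^{\alpha-j}\big)$, which the paper's argument does not state. You also supply a step the paper leaves tacit: Theorem \ref{auxkilbas2}(ii) and (iii) require $J^{m-\alpha}_{t_0,t}f\in C^{m-1}$ resp.\ $C^{m}$, whereas membership in $RL^{\alpha}$ only provides a continuous $m$-th \emph{weak} derivative; your observation that $J^{m-\alpha}_{t_0,t}f-J^{m}_{t_0,t}g$ has vanishing $m$-th distributional derivative, hence agrees with a polynomial of degree at most $m-1$, correctly closes this gap (the paper performs the analogous upgrade only in the proof of Theorem \ref{auxaux1}, via Rudin, and applies Theorem \ref{auxkilbas2} here without comment). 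In short: same engine, but your proof is self-contained on the weak-to-classical regularity upgrade and uniform in $j$, while the paper's is shorter by delegating $j\le m-2$ to Theorem \ref{auxkilbas2}(ii).
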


\begin{proof}
Assume initially that $m=1$. Hence, item $(iii)$ of Theorem \ref{auxkilbas2} guarantees
$$J_{t_0,t}^\alpha\Big[D_{t_0,t}^\alpha f(t)\Big]=f(t),$$
for every $t\in[t_0,t_1]$. However, since $D_{t_0,t}^{\alpha} f\in C^{0}\big([t_0,t_1],X\big)$, we have from item $(i)$ of Theorem \ref{auxkilbas2}
$$0=J_{t_0,t}^{1-(1-\alpha)}\Big[D_{t_0,t}^\alpha f(t)\Big]\bigg|_{t=t_0}=J_{t_0,t}^\alpha\Big[D_{t_0,t}^\alpha f(t)\Big]\bigg|_{t=t_0}=f(t_0).$$

If $m>1$ and $\alpha\in(m-1,m)$, since $f\in RL^\alpha\big([t_0,t_1],\mathbb{R}\big)$, item $(ii)$ of Theorem \ref{auxkilbas2} ensures that
$$f^{(j)}(t_0)=0,$$
for every $j\in\{0,\cdots,m-2\}$. Now, by item $(iii)$ of Proposition \ref{auxkilbas1} we obtain
\begin{equation}\label{202502251724}
  D_{t_0,t}^{\alpha}f(t)=\dfrac{d^{(m)}}{dt^{(m)}}\bigg[J_{t_0,t}^{m-\alpha}f(t)\bigg]
=\dfrac{d}{dt}\bigg[J_{t_0,t}^{m-\alpha}f^{(m-1)}(t)\bigg]=D_{t_0,t}^{1-m+\alpha}f^{(m-1)}(t),
\end{equation}
for every $t\in[t_0,t_1]$.

Since $D_{t_0,t}^{\alpha}f\in C^0\big([t_0,t_1],X\big)$, we deduce that $D_{t_0,t}^{1-m+\alpha}f^{(m-1)}\in C^0\big([t_0,t_1],X\big)$. Hence, the fact that $f^{(m-1)}\in C^0\big([t_0,t_1],X\big)$ allow us to conclude, by considering \eqref{202502251724} and item $(iii)$ of Theorem \ref{auxkilbas2}, that
$$J_{t_0,t}^{1-m+\alpha}\Big[D_{t_0,t}^{\alpha}f(t)\Big]=J_{t_0,t}^{1-m+\alpha}\Big[D_{t_0,t}^{1-m+\alpha}f^{(m-1)}(t)\Big]=f^{(m-1)}(t),$$
for every $t\in[t_0,t_1]$. Like it was done in the first part of this proof,
$$0=J_{t_0,t}^{1-m+\alpha}\Big[D_{t_0,t}^{1-m+\alpha}f^{(m-1)}(t)\Big]\bigg|_{t=t_0}=f^{(m-1)}(t_0).$$
\end{proof}

\begin{theorem}\label{auxaux1} Let $\alpha\in(0,\infty)$. If we consider
\begin{equation*}\|f\|_{RL^{\alpha}([t_0,t_1];X)}:=\|f\|_{C^{[\alpha]-1}([t_0,t_1];X)}+\|D^\alpha_{t_0,t}f\|_{C^{0}([t_0,t_1];X)},\end{equation*}
then $\big(RL^{\alpha}([t_0,t_1];X),\|\cdot\|_{RL^{\alpha}([t_0,t_1];X)}\big)$ is a Banach space.
\end{theorem}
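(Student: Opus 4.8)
The plan is to first dispense with the (routine) verification that $\|\cdot\|_{RL^{\alpha}([t_0,t_1];X)}$ is genuinely a norm: it is the sum of the norm $\|\cdot\|_{C^{[\alpha]-1}([t_0,t_1];X)}$ and the seminorm $f\mapsto\|D^\alpha_{t_0,t}f\|_{C^0([t_0,t_1];X)}$, with homogeneity and the triangle inequality coming from the linearity of $D^\alpha_{t_0,t}$, and positive-definiteness already guaranteed by the $C^{[\alpha]-1}$ summand. The substance is completeness, which I would establish by a closed-graph-type argument adapted to the structure $D^\alpha_{t_0,t}=(d^{[\alpha]}/dt^{[\alpha]})\circ J^{[\alpha]-\alpha}_{t_0,t}$. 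Throughout, write $m:=[\alpha]$.

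Let $(f_n)$ be Cauchy in $RL^\alpha([t_0,t_1];X)$. By the definition of the norm, $(f_n)$ is Cauchy in $C^{m-1}([t_0,t_1];X)$ and $(D^\alpha_{t_0,t}f_n)$ is Cauchy in $C^0([t_0,t_1];X)$. Since both of these classical spaces are complete, there exist $f\in C^{m-1}([t_0,t_1];X)$ and $g\in C^0([t_0,t_1];X)$ with $f_n\to f$ in $C^{m-1}$ and $D^\alpha_{t_0,t}f_n\to g$ in $C^0$. The proof then reduces to showing that $f\in RL^\alpha([t_0,t_1];X)$ with $D^\alpha_{t_0,t}f=g$; once this is known, $\|f_n-f\|_{RL^\alpha}=\|f_n-f\|_{C^{m-1}}+\|D^\alpha_{t_0,t}f_n-g\|_{C^0}\to 0$, completing the argument.

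To identify $D^\alpha_{t_0,t}f=g$, I would transfer the convergence through the fractional integral and then invoke the closedness of weak differentiation. Because $f_n\to f$ in $C^{m-1}$, in particular $f_n\to f$ in $L^1(t_0,t_1;X)$, and the $L^1$-boundedness of $J^{m-\alpha}_{t_0,t}$ (item $(i)$ of Remark \ref{eqre01}, the identity when $\alpha\in\mathbb{N}^*$) yields $J^{m-\alpha}_{t_0,t}f_n\to J^{m-\alpha}_{t_0,t}f$ in $L^1$. On the other hand, by definition $d^m/dt^m\big[J^{m-\alpha}_{t_0,t}f_n\big]=D^\alpha_{t_0,t}f_n\to g$ in $C^0\subset L^1$. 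Testing against an arbitrary $\varphi\in C_c^\infty(t_0,t_1)$ and passing to the limit in the Bochner identity $\int_{t_0}^{t_1}\big(J^{m-\alpha}_{t_0,t}f_n\big)\varphi^{(m)}\,dt=(-1)^m\int_{t_0}^{t_1}\big(D^\alpha_{t_0,t}f_n\big)\varphi\,dt$ shows that $g$ is precisely the $m$-th weak derivative of $J^{m-\alpha}_{t_0,t}f$, that is, $D^\alpha_{t_0,t}f=g$. Since $g\in C^0$ and $f\in C^{m-1}$, this places $f$ in $RL^\alpha([t_0,t_1];X)$, and the argument closes.

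I expect the identification step — proving $D^\alpha_{t_0,t}f=g$, rather than merely that limits exist — to be the crux. The obstacle is that $D^\alpha_{t_0,t}$ is not an elementary operator but a composition of the fractional integral with a weak derivative, so one cannot naively interchange the limit with $d^m/dt^m$. The clean resolution is exactly the two ingredients above: the continuity of $J^{m-\alpha}_{t_0,t}$ on $L^1$, which moves the limit inside the integral, and the closedness of the weak (distributional) derivative, whereby $L^1$-convergence of the functions together with $L^1$-convergence of their weak derivatives forces the limit of the derivatives to be the derivative of the limit.
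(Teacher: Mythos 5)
Your proof is correct, but it takes a genuinely different route from the paper's. The paper first upgrades the convergence $f_n\to f$ in $C^{[\alpha]-1}([t_0,t_1];X)$ to convergence $J^{[\alpha]-\alpha}_{t_0,t}f_n\to J^{[\alpha]-\alpha}_{t_0,t}f$ in $C^{[\alpha]-1}([t_0,t_1];X)$ --- a step that costs Proposition \ref{nulo} (vanishing of all derivatives at $t_0$ for members of $RL^\alpha$) together with the interchange identity of item $(iii)$ of Proposition \ref{auxkilbas1} --- and then invokes the classical theorem on uniform convergence of derivatives (\cite[Theorem 7.17]{Ru1}) to conclude $J^{[\alpha]-\alpha}_{t_0,t}f\in C^{[\alpha]}([t_0,t_1];X)$ and $D^\alpha_{t_0,t}f=g$ in the classical sense. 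You instead exploit the fact that the paper defines $d^{[\alpha]}/dt^{[\alpha]}$ in the weak sense: you need only $J^{[\alpha]-\alpha}_{t_0,t}f_n\to J^{[\alpha]-\alpha}_{t_0,t}f$ in $L^1$ (mere $L^1$-boundedness of the fractional integral, item $(i)$ of Remark \ref{eqre01}) plus the closedness of the distributional derivative, obtained by passing to the limit in the test-function identity. Your route buys two things: it bypasses Proposition \ref{nulo} entirely --- which is worth noting, since that proposition is stated in the paper only for $X=\mathbb{R}$ while the theorem concerns a general Banach space $X$, so your argument sidesteps a small scope mismatch in the paper's citation chain --- and it is structurally a clean closed-operator argument requiring no information about initial values. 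What the paper's route buys in exchange is stronger output: classical $C^{[\alpha]}$-regularity of $J^{[\alpha]-\alpha}_{t_0,t}f$, i.e., existence of the defining derivative pointwise everywhere rather than a weak derivative with a continuous representative. Under your argument this extra regularity still follows a posteriori (a continuous $m$-th weak derivative of a continuous function forces classical $C^m$-smoothness of its representative, by induction from the order-one case), but if the pointwise reading of $D^\alpha_{t_0,t}f\in C^0([t_0,t_1];X)$ is intended, you should append that one-line remark to close the identification fully.
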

\begin{proof} The fact that $RL^{\alpha}([t_0,t_1];X)$ is a vector space and that $\|f\|_{RL^{\alpha}([t_0,t_1];X)}$ defines a norm follows directly from the definition. Since the case $\alpha\in\mathbb{N}^*$ is well known, we assume $\alpha\in(0,\infty)\setminus\mathbb{N}^*$. Consider a Cauchy sequence $\{f_n\}_{n=1}^\infty\subset RL^{\alpha}\big([t_0,t_1];X\big)$. Then there exists $f\in C^{[\alpha]-1}\big([t_0,t_1];X\big)$ such that
\begin{equation}\label{auxhelp1}
\|f_n-f\|_{C^{[\alpha]-1}([t_0,t_1];X)}\rightarrow0,\quad\text{as } n\rightarrow\infty.
\end{equation}

From \eqref{auxhelp1}, item $(i)$ of Remark \ref{eqre01}, item $(iii)$ of Proposition \ref{auxkilbas1} and Proposition \ref{nulo}, we obtain
\begin{equation*}
  \|J_{t_0,t}^{[\alpha]-\alpha} f_n - J_{t_0,t}^{[\alpha]-\alpha} f\|_{C^{[\alpha]-1}([t_0,t_1];X)}\rightarrow 0,\quad\text{as } n\rightarrow\infty.
\end{equation*}

Since $\{D^\alpha_{t_0,t}f_n\}_{n=1}^\infty$ is a Cauchy sequence in $C^{0}([t_0,t_1];X)$, there exists $g\in C^{0}([t_0,t_1];X)$ such that
\begin{equation*}
\|D^\alpha_{t_0,t}f_n-g\|_{C^{0}([t_0,t_1];X)}\rightarrow 0,\quad\text{as } n\rightarrow\infty.
\end{equation*}

Finally, due to \cite[Theorem 7.17]{Ru1} we have $J_{t_0,t}^{[\alpha]-\alpha} f\in C^{[\alpha]}([t_0,t_1];X)$ and
\begin{equation*}
g(t)= D^\alpha_{t_0,t}f(t),\quad \text{for every } t\in[t_0,t_1],
\end{equation*}
which completes the proof.

\end{proof}

As an consequence, we have:

\begin{corollary}\label{coro}
Let $\alpha\in(0,\infty)$. If we consider
\begin{equation*} \|f\|_{C^{\alpha}([t_0,t_1];X)}:=\|f\|_{C^{[\alpha]-1}([t_0,t_1];X)}+\|cD^\alpha_{t_0,t}f\|_{C^{0}([t_0,t_1];X)},\end{equation*}
then $\big(C^{\alpha}([t_0,t_1];X),\|\cdot\|_{C^{\alpha}([t_0,t_1];X)}\big)$ is a Banach space.
\end{corollary}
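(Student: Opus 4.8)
The plan is to deduce completeness of $C^\alpha$ directly from that of $RL^\alpha$ established in Theorem \ref{auxaux1}, by using the very definition of the Caputo derivative to split off a finite-dimensional polynomial part. The case $\alpha\in\mathbb{N}^*$ is classical (Remark \ref{202502272337}), so I would fix $\alpha\in(0,\infty)\setminus\mathbb{N}^*$ and write $m=[\alpha]$. For $f\in C^{m-1}([t_0,t_1];X)$ denote by $T_f(t):=\sum_{j=0}^{m-1}\frac{f^{(j)}(t_0)}{j!}(t-t_0)^j$ its Taylor polynomial at $t_0$, so that by \eqref{202502231626} one has $cD^\alpha_{t_0,t}f=D^\alpha_{t_0,t}(f-T_f)$. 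Since $T_f$ is smooth and $f\in C^{m-1}$, the function $f-T_f$ again lies in $C^{m-1}$ and satisfies $(f-T_f)^{(j)}(t_0)=0$ for $0\le j\le m-1$; hence $f\in C^\alpha$ if and only if $f-T_f\in RL^\alpha$.

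Next I would record the algebraic structure this produces. Let $P$ be the finite-dimensional space of polynomials of degree at most $m-1$ with coefficients in $X$. Every $p\in P$ equals its own Taylor polynomial, so $cD^\alpha_{t_0,t}p=D^\alpha_{t_0,t}(p-T_p)=0\in C^0$, giving $P\subset C^\alpha$. On the other hand, Proposition \ref{nulo} shows that any $g\in RL^\alpha$ has $g^{(j)}(t_0)=0$ for $0\le j\le m-1$, whence $T_g=0$ and $g\in C^\alpha$; moreover a polynomial lying in $RL^\alpha$ must vanish, so $RL^\alpha\cap P=\{0\}$. Combining this with the decomposition $f=(f-T_f)+T_f$ from the previous step yields the algebraic direct sum $C^\alpha([t_0,t_1];X)=RL^\alpha([t_0,t_1];X)\oplus P$.

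The completeness then follows by transporting it through this splitting. The key observation is that on $RL^\alpha$ the two norms agree: for $g\in RL^\alpha$ one has $T_g=0$, so $cD^\alpha_{t_0,t}g=D^\alpha_{t_0,t}g$ and therefore $\|g\|_{C^\alpha}=\|g\|_{RL^\alpha}$. I would then consider the linear bijection $\Psi\colon (RL^\alpha,\|\cdot\|_{RL^\alpha})\times P\to (C^\alpha,\|\cdot\|_{C^\alpha})$, $\Psi(g,p)=g+p$, and check that it is a topological isomorphism: boundedness of $\Psi$ and of its inverse $\Psi^{-1}(f)=(f-T_f,\,T_f)$ reduces to the continuity of the coefficient maps $f\mapsto f^{(j)}(t_0)$ on $C^{m-1}$ (bounded by $\|f\|_{C^{m-1}}\le\|f\|_{C^\alpha}$) together with the equivalence of all norms on the finite-dimensional space $P$. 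Since $RL^\alpha$ is complete by Theorem \ref{auxaux1} and $P$ is finite-dimensional, the product is complete, and $\Psi$ transports this to $C^\alpha$. Equivalently, given a Cauchy sequence $\{f_n\}$ in $C^\alpha$, the polynomial parts $T_{f_n}$ converge in $P$ because their coefficients $f_n^{(j)}(t_0)$ are Cauchy in $X$, and then $g_n:=f_n-T_{f_n}$ is Cauchy in $RL^\alpha$ and converges there, so $f_n=g_n+T_{f_n}$ converges in $C^\alpha$.

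I expect the only genuinely delicate point to be the norm bookkeeping around the finite-dimensional summand $P$---specifically, verifying that $\|\cdot\|_{C^\alpha}$ restricts to exactly $\|\cdot\|_{RL^\alpha}$ on $RL^\alpha$ (which rests on Proposition \ref{nulo}) and that the complementary projection $f\mapsto T_f$ onto $P$ is bounded. Everything else is a formal consequence of Theorem \ref{auxaux1} and the definition of the Caputo derivative; in particular, no new estimate on the fractional operators is required.
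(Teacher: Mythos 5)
Your proposal is correct and is essentially the paper's own proof: both subtract the Taylor polynomial $T_f$ at $t_0$ to pass to $g=f-T_f\in RL^{\alpha}([t_0,t_1];X)$ (using $cD^{\alpha}_{t_0,t}f=D^{\alpha}_{t_0,t}(f-T_f)$ and Proposition \ref{nulo}) and then invoke the completeness of $RL^{\alpha}$ from Theorem \ref{auxaux1}, your final Cauchy-sequence paragraph being exactly the paper's argument, merely repackaged as the splitting $C^{\alpha}=RL^{\alpha}\oplus P$. One cosmetic slip: for a general Banach space $X$ the polynomial summand $P\cong X^{[\alpha]}$ is not finite-dimensional, so you cannot cite ``equivalence of all norms''; but the specific estimates you need (completeness of $P$ and boundedness of $f\mapsto T_f$ via the coefficient maps $f\mapsto f^{(j)}(t_0)$, noting $\|p\|_{C^{\alpha}}=\|p\|_{C^{[\alpha]-1}}$ on $P$ since $cD^{\alpha}_{t_0,t}p=0$) hold by direct computation, so the argument survives unchanged.
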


\begin{proof}
 Again, the fact that $C^{\alpha}([t_0,t_1];X)$ is a vector space and that $\|\cdot\|_{C^{\alpha}([t_0,t_1];X)}$ defines a norm follows directly from the definition. Since the case $\alpha \in \mathbb{N}^*$ is well known, we assume $\alpha \in (0,\infty)\setminus\mathbb{N}^*$ and consider a Cauchy sequence $\{f_n\}_{n=1}^{\infty}\subset C^{\alpha}([t_0,t_1];X)$. Define
$$
g_n(t) := f_n(t)-\sum_{j=0}^{[\alpha]-1}\frac{f_n^{(j)}(t_0)}{j!}(t-t_0)^{j}.
$$
Since $g_n^{(j)}(t_0)=0$ for every $j\in\{0,\dots,[\alpha]-1\}$, we have that $g_n \in RL^{\alpha}([t_0,t_1];X)$. By Theorem \ref{auxaux1}, there exists $g\in RL^{\alpha}([t_0,t_1];X)$ such that
$$
\|g_n-g\|_{RL^{\alpha}([t_0,t_1];X)}\rightarrow 0, \quad \text{as } n\rightarrow\infty.
$$

On the other hand, since $\{f_n\}_{n=1}^\infty$ is a Cauchy sequence in $C^{\alpha}([t_0,t_1];X)$, there exists $f\in C^{[\alpha]-1}([t_0,t_1];X)$ satisfying
$$
\|f_n-f\|_{C^{[\alpha]-1}([t_0,t_1];X)}\rightarrow 0, \quad\text{as } n\rightarrow\infty.
$$

Due to the uniform convergence, we must have
$$
g(t) = f(t)-\sum_{j=0}^{[\alpha]-1}\frac{f^{(j)}(t_0)}{j!}(t-t_0)^{j},
$$
for every $t\in[t_0,t_1]$. Therefore, since $D^\alpha_{t_0,t}g_n=cD^\alpha_{t_0,t}f_n$ and $D^\alpha_{t_0,t}g=cD^\alpha_{t_0,t}f$, we obtain
$$
\|cD^\alpha_{t_0,t}f_n - cD^\alpha_{t_0,t}f\|_{C^{0}([t_0,t_1];X)}\rightarrow 0, \quad\text{as } n\rightarrow\infty,
$$
completing the proof.
\end{proof}

We now turn our attention to the relationship between the spaces $RL^{\alpha}\big([t_0,t_1];X\big)$ and $C^{\alpha}\big([t_0,t_1];X\big)$.

\begin{remark}\label{eqre02111}
Assume that $\alpha\in(0,\infty)$.
  \begin{itemize}
    \item[(i)] If $\alpha\not\in\mathbb{N}$ and $f\in RL^{\alpha}\big([t_0,t_1];X\big)$, then
    $$
    \|f\|_{RL^{\alpha}([t_0,t_1];X)} = \|f\|_{C^{\alpha}([t_0,t_1];X)}.
    $$
    Consequently, we obtain the following characterization:
    \begin{equation*}
    RL^{\alpha}([t_0,t_1];X) = \Big\{ f\in C^{\alpha}([t_0,t_1];X) : f^{(j)}(t_0) = 0, \,\, \forall j\in\{0,\dots,[\alpha]-1\} \Big\}.
    \end{equation*}

    \item[(ii)] If $\alpha>1$ and $f\in RL^{\alpha}([t_0,t_1];X)$, then for every $j\in\{1,\dots,[\alpha]-1\}$, we have
    \[
    D_{t_0,t}^\alpha f(t) = D_{t_0,t}^{\alpha-j} f^{(j)}(t) \in C^0([t_0,t_1];X).
    \]
    Consequently, $f^{(j)}\in RL^{\alpha-j}([t_0,t_1];X)$, for every $j\in\{1,\dots,[\alpha]-1\}$.
  \end{itemize}
\end{remark}

Therefore, we have the following

\begin{proposition} For $\alpha>0$, it holds that $RL^{\alpha}([t_0,t_1];X)$ is continuously embedded in $C^\alpha([t_0,t_1];X)$. Moreover, if $\alpha\in\mathbb{N}$, then $RL^{\alpha}([t_0,t_1];X) = C^\alpha([t_0,t_1];X)$, whereas if $\alpha\not\in\mathbb{N}$, then $RL^{\alpha}([t_0,t_1];X) \subsetneq C^\alpha([t_0,t_1];X)$.
\end{proposition}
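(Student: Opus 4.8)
The plan is to let the two preceding remarks carry the analytic weight and to organize the argument around three pieces: the continuous embedding, the integer-case equality, and the non-integer-case strict inclusion.

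For the continuous embedding I would show that the inclusion map is in fact an isometry in both regimes. When $\alpha\not\in\mathbb{N}$, item $(i)$ of Remark \ref{eqre02111} furnishes both ingredients simultaneously: its displayed characterization gives the set inclusion $RL^{\alpha}([t_0,t_1];X)\subseteq C^{\alpha}([t_0,t_1];X)$, and the identity $\|f\|_{RL^{\alpha}([t_0,t_1];X)}=\|f\|_{C^{\alpha}([t_0,t_1];X)}$ for $f\in RL^{\alpha}([t_0,t_1];X)$ shows that this inclusion preserves norms, hence is continuous. When $\alpha\in\mathbb{N}$, Remark \ref{202502272337} identifies both spaces with the classical space of $\alpha$-times continuously differentiable functions, and since item $(iv)$ of Remark \ref{eqre01} makes $D^\alpha_{t_0,t}f$ and $cD^\alpha_{t_0,t}f$ agree with the ordinary $\alpha$-th derivative, the two norms coincide and the inclusion is once more an isometry. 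This settles the continuous embedding for every $\alpha>0$ and, at the same time, the equality $RL^{\alpha}([t_0,t_1];X)=C^{\alpha}([t_0,t_1];X)$ in the integer case.

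It then remains to establish the strict inclusion when $\alpha\not\in\mathbb{N}$. Here I would invoke the characterization in item $(i)$ of Remark \ref{eqre02111}, according to which $RL^{\alpha}([t_0,t_1];X)$ consists precisely of those $f\in C^{\alpha}([t_0,t_1];X)$ with $f^{(j)}(t_0)=0$ for all $j\in\{0,\dots,[\alpha]-1\}$. It therefore suffices to produce a single element of $C^{\alpha}([t_0,t_1];X)$ that violates one of these vanishing conditions. A constant function $f\equiv x_0$ with $x_0\in X\setminus\{0\}$ does exactly this: it is smooth, so $f\in C^{[\alpha]-1}([t_0,t_1];X)$, and because the Taylor polynomial subtracted in the definition of the Caputo derivative equals $f$ itself, one obtains $cD^\alpha_{t_0,t}f=D^\alpha_{t_0,t}[0]=0\in C^{0}([t_0,t_1];X)$, whence $f\in C^{\alpha}([t_0,t_1];X)$. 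Since $f(t_0)=x_0\neq0$, the characterization forces $f\notin RL^{\alpha}([t_0,t_1];X)$, and therefore $RL^{\alpha}([t_0,t_1];X)\subsetneq C^{\alpha}([t_0,t_1];X)$.

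Because the remarks already encapsulate the delicate content, I do not expect any step to pose a real obstacle; the only point deserving a moment of care is confirming that the chosen counterexample genuinely lies in $C^{\alpha}([t_0,t_1];X)$ but not in $RL^{\alpha}([t_0,t_1];X)$. As an independent check in the simplest subcase $[\alpha]=1$, one can compute directly that $D^\alpha_{t_0,t}f(t)=x_0(t-t_0)^{-\alpha}/\Gamma(1-\alpha)$ is unbounded near $t_0$, so $f\notin RL^{\alpha}([t_0,t_1];X)$, while its Caputo counterpart vanishes identically, which reconfirms that the two spaces are genuinely different whenever $\alpha\not\in\mathbb{N}$.
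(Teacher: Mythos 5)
Your proposal is correct and follows essentially the same route as the paper: the continuous embedding and the integer-case equality are obtained from Remark \ref{202502272337}, item $(iv)$ of Remark \ref{eqre01}, and item $(i)$ of Remark \ref{eqre02111}, and the strictness for $\alpha\not\in\mathbb{N}$ is witnessed by a nonzero constant function, which is one of the paper's own counterexamples (the paper additionally mentions the Mittag-Leffler function, but the constant alone suffices). Your use of the vanishing-initial-values characterization to rule the constant out of $RL^{\alpha}([t_0,t_1];X)$, cross-checked by the explicit computation $D^\alpha_{t_0,t}x_0=x_0(t-t_0)^{-\alpha}/\Gamma(1-\alpha)$, matches the paper's reasoning in substance.
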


\begin{proof} For $\alpha\in\mathbb{N}$, the equality (and therefore the continuous inclusion) follows from item $(iv)$ of Remark \ref{eqre01} and Remark \ref{202502272337}.

If $\alpha\not\in\mathbb{N}$, then the continuous embedding follows direct from item $(i)$ from Remark \ref{eqre02111}.  Finally, the strictness for the case $\alpha\not\in\mathbb{N}$ follows from the fact that the Mittag-Leffler function $E_\alpha\big((t-t_0)^\alpha\big)$, defined by
$$ E_\alpha\big((t-t_0)^\alpha\big):=\sum_{j=1}^\infty\dfrac{(t-t_0)^{\alpha j}}{\Gamma(\alpha j+1)},$$
as well as nonzero constant functions, are examples of functions that belong to the space $C^\alpha\big([t_0,t_1],\mathbb{R}\big)$ but do not belong to $RL^\alpha\big([t_0,t_1],\mathbb{R}\big)$. Indeed, observe that
$$cD^\alpha_{t_0,t}c=0\qquad\textrm{while}\qquad D^\alpha_{t_0,t}c=\dfrac{(t-t_0)^{-\alpha}c}{\Gamma(1-\alpha)},$$
and also (see \cite{HaMaSa1} for details)
$$cD^\alpha_{t_0,t}E_\alpha\big(t^\alpha\big)=E_\alpha\big((t-t_0)^\alpha\big)\quad\textrm{while}\quad D^\alpha_{t_0,t}E_{\alpha}\big((t-t_0)^\alpha\big)=E_{\alpha}\big((t-t_0)^\alpha\big)-\dfrac{(t-t_0)^{-\alpha}}{\Gamma(1-\alpha)}.$$
\end{proof}

We now address the relationships between RL $\alpha$-times and $\beta$-times continuously differentiable functions. For this goal, we first establish an identity connecting these operators.

\begin{lemma}\label{202502271758} For $\alpha,\beta\in [0,\infty)$, with $\alpha<\beta$, if $f\in RL^\beta([t_0,t_1];X)$, we have
$$D_{t_0,t}^\alpha f(t)=\left\{\begin{array}{ll}J_{t_0,t}^{\beta-\alpha}\Big[f^{(\beta)}(t)\Big]+\displaystyle\sum_{j=\alpha}^{\beta-1}\left[\dfrac{(t-t_0)^{-\alpha+j}f^{(j)}(t_0)}{\Gamma(-\alpha+j+1)}\right],&\alpha,\beta\in\mathbb{N},\vspace*{0.2cm}\\
J_{t_0,t}^{\beta-\alpha}\Big[f^{(\beta)}(t)\Big]+\displaystyle\sum_{j=0}^{\beta-1}\left[\dfrac{(t-t_0)^{-\alpha+j}f^{(j)}(t_0)}{\Gamma(-\alpha+j+1)}\right],&\alpha\not\in\mathbb{N}\textrm{ and }\beta\in\mathbb{N},\vspace*{0.2cm}\\
J_{t_0,t}^{\beta-\alpha}\Big[D_{t_0,t}^\beta f(t)\Big],&\beta\not\in\mathbb{N}.\end{array}\right.$$
for every $t\in[t_0,t_1]$, except in the second case, where the identity holds only for $t\in(t_0,t_1]$.
\end{lemma}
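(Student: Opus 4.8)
The plan is to treat the three cases separately, since each rests on a different combination of the tools already available. Throughout, write $m := [\alpha]$ and $n := [\beta]$ for the ceilings, and recall that the engine behind all three identities is Proposition \ref{auxkilbas1}: its item $(i)$ (the semigroup law $J_{t_0,t}^{\gamma}J_{t_0,t}^{\delta}=J_{t_0,t}^{\gamma+\delta}$ on $L^1$), together with the elementary fact that the $m$-th derivative of the $m$-fold iterated integral returns the integrand, i.e.\ $\frac{d^m}{dt^m}J_{t_0,t}^m h = h$ for continuous $h$.

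First I would dispose of the two cases with $\beta\in\mathbb{N}$, where $f\in RL^\beta=C^\beta$ is classically $\beta$-times continuously differentiable. For $\alpha\in\mathbb{N}$, I apply item $(ii)$ of Proposition \ref{auxkilbas1} to the function $g:=f^{(\alpha)}$ with the integer $\beta-\alpha$ and integral order $0$ (using $J_{t_0,t}^0=\mathrm{Id}$): this gives $f^{(\alpha)}=J_{t_0,t}^{\beta-\alpha}f^{(\beta)}+\sum_{j=0}^{\beta-\alpha-1}\frac{(t-t_0)^j f^{(\alpha+j)}(t_0)}{\Gamma(j+1)}$, and reindexing by $k=\alpha+j$ yields precisely the first line. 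For $\alpha\notin\mathbb{N}$, I instead expand $J_{t_0,t}^{m-\alpha}f$ via item $(ii)$ with integral order $m-\alpha$ and the integer $\beta$, and then differentiate $m$ times. The integral term becomes $\frac{d^m}{dt^m}J_{t_0,t}^{(\beta-\alpha)+m}f^{(\beta)}=\frac{d^m}{dt^m}J_{t_0,t}^m\big[J_{t_0,t}^{\beta-\alpha}f^{(\beta)}\big]=J_{t_0,t}^{\beta-\alpha}f^{(\beta)}$ by the semigroup law and the $\frac{d^m}{dt^m}J_{t_0,t}^m=\mathrm{Id}$ fact, while each power $(t-t_0)^{m-\alpha+j}$ differentiates to $\frac{\Gamma(m-\alpha+j+1)}{\Gamma(-\alpha+j+1)}(t-t_0)^{-\alpha+j}$, cancelling the gamma factor in the coefficient and producing the stated sum. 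Since the exponents $-\alpha+j$ are negative for $j\le m-1$, this differentiation is only licit away from $t_0$, which is exactly why the second identity is asserted only on $(t_0,t_1]$.

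For the third case, $\beta\notin\mathbb{N}$, I would argue by inverting $J_{t_0,t}^\alpha$. Because $f\in RL^\beta$ forces $J_{t_0,t}^{n-\beta}f\in C^{n}$ (a $C^{n-1}$ function whose $n$-th weak derivative equals the continuous $D_{t_0,t}^\beta f$ belongs to $C^n$), Theorem \ref{auxkilbas2}$(iii)$ gives $J_{t_0,t}^{\beta}\big[D_{t_0,t}^\beta f\big]=f$. Splitting $J_{t_0,t}^\beta=J_{t_0,t}^\alpha J_{t_0,t}^{\beta-\alpha}$ by the semigroup law, this reads $J_{t_0,t}^\alpha\big[J_{t_0,t}^{\beta-\alpha}D_{t_0,t}^\beta f\big]=f$. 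Applying $D_{t_0,t}^\alpha$ to both sides and using $D_{t_0,t}^\alpha J_{t_0,t}^\alpha g=\frac{d^m}{dt^m}J_{t_0,t}^{m-\alpha}J_{t_0,t}^\alpha g=\frac{d^m}{dt^m}J_{t_0,t}^m g=g$ (valid for the continuous $g:=J_{t_0,t}^{\beta-\alpha}D_{t_0,t}^\beta f$) collapses the right-hand side to $g$, which is the desired identity; continuity of both sides promotes it from a.e.\ to every $t\in[t_0,t_1]$.

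The main obstacle is not any single computation but the repeated need to justify interchanging integer-order differentiation with fractional integration, namely the identities $\frac{d^m}{dt^m}J_{t_0,t}^m=\mathrm{Id}$ and $\frac{d^m}{dt^m}J_{t_0,t}^{m+\delta}=J_{t_0,t}^{\delta}$; these must be invoked with enough regularity (continuity of the integrand) to hold in the classical, rather than merely weak, sense, and in Case $3$ one additionally has to confirm the hypothesis $J_{t_0,t}^{n-\beta}f\in C^n$ required to legitimately apply Theorem \ref{auxkilbas2}$(iii)$. Careful bookkeeping at the boundary point $t_0$ — where the singular powers $(t-t_0)^{-\alpha+j}$ appear — is what pins down the distinction between the identity holding on all of $[t_0,t_1]$ and only on $(t_0,t_1]$.
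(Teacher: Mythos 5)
Your proposal is correct. For the two cases with $\beta\in\mathbb{N}$ it follows essentially the paper's route: both arguments feed $f$ into item $(ii)$ of Proposition \ref{auxkilbas1} and then differentiate, with only a bookkeeping difference. The paper expands $J_{t_0,t}^{\beta-\alpha}f$ and differentiates $\beta$ times, which forces a separate verification (via the semigroup property) that $\tfrac{d^\beta}{dt^\beta}\big[J_{t_0,t}^{\beta-\alpha}f(t)\big]=D_{t_0,t}^{\alpha}f(t)$; you expand $J_{t_0,t}^{[\alpha]-\alpha}f$ and differentiate $[\alpha]$ times, so the left-hand side is $D_{t_0,t}^{\alpha}f$ by definition. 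Your version is slightly more direct, at the minor cost of invoking Proposition \ref{auxkilbas1}$(ii)$ with integral order $0$ (Taylor's formula with integral remainder), which sits just outside the stated range $\alpha\in(0,\infty)$ of that proposition but is classical for $W^{m,1}$ functions, so this is cosmetic rather than a gap. Where you genuinely diverge is the case $\beta\notin\mathbb{N}$: the paper works from right to left, commuting $J_{t_0,t}^{\beta-\alpha}$ past $d^{[\beta]}/dt^{[\beta]}$ via Proposition \ref{auxkilbas1}$(iii)$ --- whose vanishing-initial-data hypothesis is supplied by items $(i)$--$(ii)$ of Theorem \ref{auxkilbas2} --- and then cancelling derivatives against integrals through the splitting $J_{t_0,t}^{\beta-\alpha}J_{t_0,t}^{[\beta]-\beta}=J_{t_0,t}^{[\beta]-[\alpha]}J_{t_0,t}^{[\alpha]-\alpha}$. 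You instead reconstruct $f=J_{t_0,t}^{\beta}\big[D_{t_0,t}^{\beta}f\big]$ from Theorem \ref{auxkilbas2}$(iii)$, split $J_{t_0,t}^{\beta}=J_{t_0,t}^{\alpha}J_{t_0,t}^{\beta-\alpha}$, and apply $D_{t_0,t}^{\alpha}$ using the left-inverse identity $D_{t_0,t}^{\alpha}J_{t_0,t}^{\alpha}g=g$ for continuous $g$. Both routes rest on the same toolkit, but yours packages the delicate initial-value bookkeeping inside a single previously established theorem (whose hypothesis $J_{t_0,t}^{[\beta]-\beta}f\in C^{[\beta]}$ you correctly verify from the continuity of the weak $[\beta]$-th derivative), which makes the argument marginally shorter and makes transparent why continuity of both sides upgrades the a.e.\ semigroup identities to pointwise ones; the paper's version keeps the commutation mechanism explicit instead. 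Your observation that the singular powers $(t-t_0)^{-\alpha+j}$ obstruct differentiation at $t_0$ is precisely the paper's reason for restricting the second identity to $(t_0,t_1]$.
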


\begin{proof} Let us split this proof into two cases.

(i) Assume that $\beta \in\mathbb{N}$. Then, item $(ii)$ of Proposition \ref{auxkilbas1} ensures that
\begin{equation}\label{202502271708}
J_{t_0,t}^{(\beta-\alpha)+\beta}\big(f^{(\beta)}(t)\big) = J_{t_0,t}^{\beta-\alpha} f(t) - \sum_{j=0}^{\beta-1} \frac{(t-t_0)^{\beta-\alpha+j} f^{(j)}(t_0)}{\Gamma(\beta-\alpha+j+1)},
\end{equation}
for every $t\in[t_0,t_1]$.

If $\alpha\in\mathbb{N}$, by differentiating $\beta$ times both sides of \eqref{202502271708}, we obtain
$$
J_{t_0,t}^{\beta-\alpha}\big(f^{(\beta)}(t)\big) = f^{(\alpha)}(t) - \sum_{j=\alpha}^{\beta-1} \frac{(t-t_0)^{-\alpha+j} f^{(j)}(t_0)}{\Gamma(-\alpha+j+1)},
$$
for every $t\in[t_0,t_1]$.

On the other hand, if $\alpha\not\in\mathbb{N}$, by differentiating $\beta$ times both sides of \eqref{202502271708}, we obtain
\begin{equation}\label{202502271702}
J_{t_0,t}^{\beta-\alpha}\big(f^{(\beta)}(t)\big) = \frac{d^\beta}{dt^\beta} \left(J_{t_0,t}^{\beta-\alpha} f(t)\right) - \sum_{j=0}^{\beta-1} \frac{(t-t_0)^{-\alpha+j} f^{(j)}(t_0)}{\Gamma(-\alpha+j+1)},
\end{equation}
for every $t\in(t_0,t_1]$.

Since we have
$$
\frac{d^\beta}{dt^\beta} \left(J_{t_0,t}^{\beta-\alpha} f(t)\right) = \frac{d^{[\alpha]}}{dt^{[\alpha]}} \left\{ \frac{d^{\beta-[\alpha]}}{dt^{\beta-[\alpha]}} \left[J_{t_0,t}^{\beta-[\alpha]} \Big(J_{t_0,t}^{[\alpha]-\alpha} f(t)\Big) \right] \right\} = D_{t_0,t}^\alpha f(t),
$$
for every $t\in[t_0,t_1]$, it follows from \eqref{202502271702} that
$$
J_{t_0,t}^{\beta-\alpha}\big(f^{(\beta)}(t)\big) = D_{t_0,t}^\alpha f(t) - \sum_{j=0}^{\beta-1} \frac{(t-t_0)^{-\alpha+j} f^{(j)}(t_0)}{\Gamma(-\alpha+j+1)},
$$
for every $t\in(t_0,t_1]$.

(ii) Assume that $\beta \not\in\mathbb{N}$. By item $(iii)$ of Proposition \ref{auxkilbas1} and items $(i)$ and $(ii)$ of Theorem \ref{auxkilbas2}, we obtain
$$
J_{t_0,t}^{\beta-\alpha}\Big(D_{t_0,t}^\beta f(t)\Big) = \frac{d^{[\beta]}}{dt^{[\beta]}}\left[J_{t_0,t}^{\beta-\alpha}\Big(J_{t_0,t}^{[\beta]-\beta} f(t)\Big)\right],
$$
for every $t\in[t_0,t_1]$.

Finally, using the semigroup property of the RL fractional integral, we get
$$
J_{t_0,t}^{\beta-\alpha}\Big(D_{t_0,t}^\beta f(t)\Big) = \frac{d^{[\beta]}}{dt^{[\beta]}}\left[J_{t_0,t}^{[\beta]-[\alpha]}\Big(J_{t_0,t}^{[\alpha]-\alpha} f(t)\Big)\right] = \frac{d^{[\alpha]}}{dt^{[\alpha]}}\Big(J_{t_0,t}^{[\alpha]-\alpha} f(t)\Big) = D_{t_0,t}^\alpha f(t),
$$
for every $t\in[t_0,t_1]$, as desired.

\end{proof}

\begin{theorem}\label{finalinclusionRL} Consider $\alpha,\beta\in [0,\infty)$ satisfying $\alpha<\beta$ and one of the following conditions:
\begin{center}
(i)\quad $\alpha\in\mathbb{N}$;\qquad or\qquad (ii)\quad $\alpha\not\in\mathbb{N}$ and $\beta\not\in\mathbb{N}$.
\end{center}
Then
\begin{equation}\label{eqeq}
  RL^{\beta}\big([t_0,t_1];X\big)\subsetneq RL^{\alpha}\big([t_0,t_1];X\big),
\end{equation}
continuously, that is, there exists $M>0$ such that
\begin{equation}\label{202502271811}\|f\|_{RL^{\alpha}([t_0,t_1];X)}\leq M\|f\|_{RL^{\beta}([t_0,t_1];X)}.\end{equation}

Moreover, if $\alpha\not\in\mathbb{N}$ and $\beta\in\mathbb{N}$, $RL^{\beta}\big([t_0,t_1];X\big)\not\subset RL^{\alpha}\big([t_0,t_1];X\big)$.
\end{theorem}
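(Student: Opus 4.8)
The plan is to split the statement into three independent assertions: the set inclusion $RL^{\beta}\subset RL^{\alpha}$ together with the norm bound \eqref{202502271811}, the strictness of this inclusion, and the failure of inclusion in the excluded regime $\alpha\notin\mathbb{N}$, $\beta\in\mathbb{N}$. The engine for the first two is Lemma \ref{202502271758}, which rewrites $D^{\alpha}_{t_0,t}f$ in terms of higher-order data of $f$, combined with the mapping property of $J^{\beta-\alpha}_{t_0,t}$ on $C^0$.

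First I would record the elementary estimate that, for $\mu>0$ and $g\in C^0([t_0,t_1];X)$,
\[
\|J^{\mu}_{t_0,t}g\|_{C^0([t_0,t_1];X)}\leq \frac{(t_1-t_0)^{\mu}}{\Gamma(\mu+1)}\,\|g\|_{C^0([t_0,t_1];X)},
\]
which follows by bounding the kernel integral and which, via Remark \ref{eqre01}, also confirms that $J^{\mu}_{t_0,t}$ preserves continuity. Fix $f\in RL^{\beta}([t_0,t_1];X)$. Since $\alpha<\beta$ gives $[\alpha]-1\leq[\beta]-1$, the $C^k$-norms are monotone and $\|f\|_{C^{[\alpha]-1}}\leq\|f\|_{C^{[\beta]-1}}\leq\|f\|_{RL^{\beta}}$, so it remains to control $D^{\alpha}_{t_0,t}f$. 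Under hypothesis (ii), or under (i) with $\beta\notin\mathbb{N}$, the third branch of Lemma \ref{202502271758} gives $D^{\alpha}_{t_0,t}f=J^{\beta-\alpha}_{t_0,t}[D^{\beta}_{t_0,t}f]$, so the displayed estimate yields $\|D^{\alpha}_{t_0,t}f\|_{C^0}\leq C\|D^{\beta}_{t_0,t}f\|_{C^0}\leq C\|f\|_{RL^{\beta}}$ and in particular $D^{\alpha}_{t_0,t}f\in C^0$. Under (i) with $\beta\in\mathbb{N}$ the first branch produces an additional finite sum of terms $(t-t_0)^{-\alpha+j}f^{(j)}(t_0)/\Gamma(-\alpha+j+1)$ with $\alpha\leq j\leq\beta-1$; each exponent $-\alpha+j$ is a nonnegative integer, so these terms are continuous and bounded by $\|f\|_{C^{[\beta]-1}}$ up to fixed constants. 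Collecting the estimates gives $f\in RL^{\alpha}$ and \eqref{202502271811} with a suitable $M$.

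For the strict inclusion I would exhibit a single family of counterexamples. Fixing $x_0\in X\setminus\{0\}$ and recalling the power rule $D^{\sigma}_{t_0,t}(t-t_0)^{\gamma}=\frac{\Gamma(\gamma+1)}{\Gamma(\gamma-\sigma+1)}(t-t_0)^{\gamma-\sigma}$ (obtained from $J^{\mu}_{t_0,t}(t-t_0)^{\gamma}=\frac{\Gamma(\gamma+1)}{\Gamma(\gamma+\mu+1)}(t-t_0)^{\gamma+\mu}$ followed by $[\sigma]$ ordinary differentiations), set $f(t)=(t-t_0)^{\gamma}x_0$ for a carefully chosen $\gamma\in(\alpha,\beta)$. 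Since $\gamma>\alpha$, the exponent $\gamma-\alpha$ is positive and $D^{\alpha}_{t_0,t}f=\frac{\Gamma(\gamma+1)}{\Gamma(\gamma-\alpha+1)}(t-t_0)^{\gamma-\alpha}x_0$ is continuous, while $\gamma>\alpha>[\alpha]-1$ forces $f\in C^{[\alpha]-1}$; hence $f\in RL^{\alpha}$. To obtain $f\notin RL^{\beta}$ it suffices that $D^{\beta}_{t_0,t}f\notin C^0$, and since $\gamma-\beta<0$ this holds precisely when the coefficient $1/\Gamma(\gamma-\beta+1)$ is nonzero, i.e. when $\gamma-\beta\notin\{-1,-2,\dots\}$. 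As $(\alpha,\beta)$ is a nondegenerate interval and $\{\beta-k:k\in\mathbb{N}^{*}\}$ is discrete, such a $\gamma$ exists, yielding $RL^{\beta}\subsetneq RL^{\alpha}$ in all admissible cases.

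Finally, for the excluded regime $\alpha\notin\mathbb{N}$ and $\beta\in\mathbb{N}$, I would take the constant function $f\equiv x_0$ with $x_0\neq0$: since $\beta\in\mathbb{N}$ we have $f\in C^{\infty}\subset RL^{\beta}$, whereas the computation $D^{\alpha}_{t_0,t}x_0=(t-t_0)^{-\alpha}x_0/\Gamma(1-\alpha)\notin C^0$, valid because $\alpha\notin\mathbb{N}$ (and consistent with the vanishing requirement of Proposition \ref{nulo} in the scalar case), shows $f\notin RL^{\alpha}$; hence $RL^{\beta}\not\subset RL^{\alpha}$. The only genuinely delicate point in the whole argument is the bookkeeping in the strictness step, namely ensuring the chosen power $\gamma$ keeps $D^{\beta}_{t_0,t}f$ from collapsing to a continuous (indeed zero) function, together with verifying nonnegativity of the boundary exponents in the integer-$\beta$ branch of Lemma \ref{202502271758}; both are elementary but must be stated with care.
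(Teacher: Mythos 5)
Your proof is correct, and on the core inclusion it runs along the paper's own track: under condition (ii) the paper likewise invokes the third branch of Lemma \ref{202502271758} to write $D^{\alpha}_{t_0,t}f=J^{\beta-\alpha}_{t_0,t}\big[D^{\beta}_{t_0,t}f\big]$ and then gets continuity and the estimate \eqref{202502271811} from the boundedness of $J^{\beta-\alpha}_{t_0,t}$ on $C^0$ (items (i)--(ii) of Remark \ref{eqre01}); for condition (i) the paper argues more directly from the definition and the classical embeddings of $C^k$ spaces, whereas you route the integer--integer case through the first branch of the Lemma and check the boundary terms $(t-t_0)^{j-\alpha}f^{(j)}(t_0)/\Gamma(j-\alpha+1)$ by hand --- slightly longer but equivalent. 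The final non-inclusion assertion is handled identically (nonzero constants, with Proposition \ref{nulo} available as a backstop). The genuine divergence is in the strictness step, and there your version is actually more careful than the paper's. The paper fixes the single function $f(t)=t^{\alpha}$ and asserts that $D^{\beta}_{0,t}f=\frac{\Gamma(\alpha+1)}{\Gamma(\alpha-\beta+1)}\,t^{\alpha-\beta}$ is discontinuous, adding that ``the general case follows directly''; but when $\beta-\alpha\in\mathbb{N}^*$ the coefficient $1/\Gamma(\alpha-\beta+1)$ vanishes, so $D^{\beta}_{0,t}f\equiv 0$ is continuous and exclusion from $RL^{\beta}$ must instead be argued through the failure of $C^{[\beta]-1}$ membership, while if $\alpha$ and $\beta$ are both integers $t^{\alpha}$ is a polynomial lying in $RL^{\beta}$, so the example fails outright in that subcase. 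Your family $f(t)=(t-t_0)^{\gamma}x_0$ with $\gamma\in(\alpha,\beta)$ chosen so that $\gamma-\beta\notin\{-1,-2,\dots\}$ sidesteps both degeneracies uniformly across all admissible cases, works verbatim for vector-valued $X$, and buys a single counterexample scheme in place of the paper's case-by-case appeal.

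One point worth making explicit in your write-up: when $\gamma-\beta\le -1$ the pointwise derivative $(t-t_0)^{\gamma-\beta}$ is not even locally integrable at $t_0$, so $D^{\beta}_{t_0,t}f$ does not exist as a function there; since $J^{[\beta]-\beta}_{t_0,t}f$ is smooth on $(t_0,t_1)$, the weak derivative agrees with the classical one on that open interval and admits no continuous extension to $[t_0,t_1]$, so $f\notin RL^{\beta}$ under either reading of the definition. With that sentence added, your argument is complete.
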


\begin{proof} If condition $(i)$ holds, \eqref{eqeq} and \eqref{202502271811} follow from the definition of the RL fractional derivative and the classical continuous embedding between spaces of continuously differentiable functions.

When condition $(ii)$ holds, consider $f\in RL^{\beta}([t_0,t_1];X)$. Then, there exists $m\in \mathbb{N}^*$ such that
$$m-1<\alpha< \beta<m,$$
or there exist $m,n\in \mathbb{N}^*$ such that
$$m-1<\alpha< m\leq n-1< \beta<n.$$

Regardless of the case, the proof of \eqref{eqeq} reduces to showing that $D_{t_0,t}^{\alpha} f(t)$ exists and is continuous in $[t_0,t_1]$. However, this follows directly from Lemma \ref{202502271758} and item $(ii)$ of Remark \ref{eqre01}.

Finally, to prove \eqref{202502271811}, recall that Lemma \ref{202502271758} also ensures the identity
$$D_{t_0,t}^\alpha f(t)=J_{t_0,t}^{\beta-\alpha}\Big[D_{t_0,t}^\beta f(t)\Big],$$
for every $t\in[t_0,t_1]$. Hence, item $(i)$ of Remark \ref{eqre01} guarantees that
\begin{multline*}
 \hspace*{1cm} \|f\|_{RL^{\alpha}([t_0,t_1];X)}= \|f\|_{C^{m-1}([t_0,t_1];X)}+\big\|J_{t_0,t}^{\beta-\alpha}\big[D_{t_0,t}^\beta f\big]\big\|_{C^{0}([t_0,t_1];X)}\\
  \leq \max\{1,M\}\|f\|_{RL^{\beta}([t_0,t_1];X)},
\end{multline*}
for some constant $M>0$.

Finally, observe that the function $f:[0,1]\rightarrow\mathbb{R}$ given by $f(t)=t^{\alpha}$ satisfies
$$D_{0,t}^\alpha f(t)=\Gamma(\alpha+1),$$
which is continuous on $[0,1]$, whereas
$$D_{0,t}^\beta f(t)=\dfrac{\Gamma(\alpha+1)}{\Gamma(\alpha-\beta+1)}t^{\alpha-\beta},$$
is not continuous on $[0,1]$. The general case follows directly from this example.

To the last assertion, notice that nonzero constant functions belong to $RL^{\beta}([t_0,t_1];X)$ but not to $RL^{\alpha}([t_0,t_1];X)$.

\end{proof}

We now address the relationships between fractional differential spaces when considering the Caputo fractional derivative.

\begin{theorem}\label{finalinclusionC} Consider $\alpha,\beta\in [0,\infty)$ satisfying $\alpha<\beta$. Then
\begin{equation*}
  C^{\beta}\big([t_0,t_1];X\big)\subsetneq C^{\alpha}\big([t_0,t_1];X\big)
\end{equation*}
continuously, that is, there exists $M>0$ such that
\begin{equation*}\|f\|_{C^{\alpha}([t_0,t_1];X)}\leq M\|f\|_{C^{\beta}([t_0,t_1];X)}.\end{equation*}
\end{theorem}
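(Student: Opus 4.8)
The plan is to reduce the Caputo statement to the Riemann--Liouville machinery already developed, exploiting the fact that the Caputo derivative annihilates the initial Taylor polynomial. Fix $f\in C^{\beta}([t_0,t_1];X)$ and abbreviate $P_\gamma(t):=\sum_{j=0}^{[\gamma]-1}\frac{f^{(j)}(t_0)}{j!}(t-t_0)^j$. The embedding of the base spaces is immediate: since $\alpha<\beta$ forces $[\alpha]\le[\beta]$, we have $f\in C^{[\beta]-1}([t_0,t_1];X)\subset C^{[\alpha]-1}([t_0,t_1];X)$ with $\|f\|_{C^{[\alpha]-1}}\le\|f\|_{C^{[\beta]-1}}\le\|f\|_{C^{\beta}}$, so the only genuine task is to show that $cD^\alpha_{t_0,t}f$ exists, is continuous, and is controlled by $\|f\|_{C^\beta}$.

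First I would introduce $g:=f-P_\beta$. Exactly as in the proof of Corollary \ref{coro}, one has $g^{(j)}(t_0)=0$ for all $j\in\{0,\dots,[\beta]-1\}$, hence $g\in RL^\beta([t_0,t_1];X)$ with $cD^\beta_{t_0,t}f=D^\beta_{t_0,t}g$ and, using Remark \ref{eqre02111}(i) when $\beta\notin\mathbb N$ and Remark \ref{202502272337} when $\beta\in\mathbb N$, the norm bound $\|g\|_{RL^\beta}\le C\|f\|_{C^\beta}$. Writing $f-P_\alpha=g+(P_\beta-P_\alpha)$ and using linearity of $D^\alpha_{t_0,t}$ where the pieces exist, I obtain the decomposition
\[
cD^\alpha_{t_0,t}f=D^\alpha_{t_0,t}g+D^\alpha_{t_0,t}\big[P_\beta-P_\alpha\big].
\]
The polynomial correction $P_\beta-P_\alpha=\sum_{j=[\alpha]}^{[\beta]-1}\frac{f^{(j)}(t_0)}{j!}(t-t_0)^j$ only involves powers $j\ge[\alpha]\ge\alpha$, so each $D^\alpha_{t_0,t}(t-t_0)^j=\frac{\Gamma(j+1)}{\Gamma(j-\alpha+1)}(t-t_0)^{j-\alpha}$ is continuous on $[t_0,t_1]$ with $C^0$-norm bounded by a constant times $|f^{(j)}(t_0)|\le\|f\|_{C^\beta}$; this term is therefore harmless.

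The heart of the argument is the term $D^\alpha_{t_0,t}g$, for which I would invoke Lemma \ref{202502271758} applied to $g\in RL^\beta$. When $\beta\notin\mathbb N$ the lemma gives directly $D^\alpha_{t_0,t}g=J^{\beta-\alpha}_{t_0,t}[D^\beta_{t_0,t}g]$, which is continuous by Remark \ref{eqre01}(ii). When $\beta\in\mathbb N$ the lemma instead yields $J^{\beta-\alpha}_{t_0,t}[g^{(\beta)}]$ plus a sum of the form $\sum_{j}\frac{(t-t_0)^{-\alpha+j}g^{(j)}(t_0)}{\Gamma(-\alpha+j+1)}$; the delicate configuration is $\alpha\notin\mathbb N$, $\beta\in\mathbb N$, precisely where the lowest-order term $(t-t_0)^{-\alpha}$ is singular and where Theorem \ref{finalinclusionRL} fails (indeed $RL^\beta\not\subset RL^\alpha$ there). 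What saves us is that $g^{(j)}(t_0)=0$ for every $j\le[\beta]-1=\beta-1$, so this entire sum vanishes and again $D^\alpha_{t_0,t}g=J^{\beta-\alpha}_{t_0,t}[g^{(\beta)}]\in C^0([t_0,t_1];X)$. I expect this vanishing-sum phenomenon to be the main obstacle, since it is the single point where the Caputo statement genuinely improves on its Riemann--Liouville counterpart. In every case the boundedness of $J^{\beta-\alpha}_{t_0,t}$ (Remark \ref{eqre01}(i)) gives $\|D^\alpha_{t_0,t}g\|_{C^0}\le C\|g\|_{RL^\beta}\le M\|f\|_{C^\beta}$, which together with the polynomial estimate completes the continuous embedding.

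For strictness I would exhibit a single explicit witness: choose any $\gamma\in(\alpha,\beta)\setminus\mathbb N$ (possible since $(\alpha,\beta)$ is a nonempty open interval) and set $f(t)=(t-t_0)^\gamma$. Since $\gamma>\alpha>[\alpha]-1$, one checks $f\in C^{[\alpha]-1}$, and because $\gamma\notin\mathbb N$ with $j<\gamma$ the coefficients $f^{(j)}(t_0)$ vanish for $j\le[\alpha]-1$, so $cD^\alpha_{t_0,t}f=D^\alpha_{t_0,t}f=\frac{\Gamma(\gamma+1)}{\Gamma(\gamma-\alpha+1)}(t-t_0)^{\gamma-\alpha}$ has exponent $\gamma-\alpha>0$ and lies in $C^0$; thus $f\in C^\alpha([t_0,t_1];X)$. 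On the other hand, if $[\beta]-1>\gamma$ already the base regularity $f\in C^{[\beta]-1}$ fails, while if $[\beta]-1<\gamma$ the same computation gives $cD^\beta_{t_0,t}f=\frac{\Gamma(\gamma+1)}{\Gamma(\gamma-\beta+1)}(t-t_0)^{\gamma-\beta}$ with exponent $\gamma-\beta<0$, unbounded near $t_0$; either way $f\notin C^\beta([t_0,t_1];X)$. This yields $C^\beta([t_0,t_1];X)\subsetneq C^\alpha([t_0,t_1];X)$.
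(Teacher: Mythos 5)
Your proposal is correct, but it takes a genuinely different route from the paper's. The paper proves this theorem by a three-way case analysis: for $\alpha\in\mathbb{N}$ the claim is read off from the definition; for $\alpha\not\in\mathbb{N}$, $\beta\in\mathbb{N}$ it applies item $(iii)$ of Proposition \ref{auxkilbas1} to $f-\sum_{j=0}^{[\alpha]-1}\frac{f^{(j)}(t_0)}{j!}(t-t_0)^j$ to get $cD^\alpha_{t_0,t}f=J^{[\alpha]-\alpha}_{t_0,t}\big[\frac{d^{[\alpha]}}{dt^{[\alpha]}}(f-P_{[\alpha]})\big]$; for $\alpha,\beta\not\in\mathbb{N}$ lying in the same integer window it reduces to Theorem \ref{finalinclusionRL} via $g=f-P_m$; and when an integer $m$ separates $\alpha$ from $\beta$ it chains $C^{\beta}\subset C^{m}\subset C^{\alpha}$ by composing the previous cases. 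You instead run a single unified decomposition $cD^\alpha_{t_0,t}f=D^\alpha_{t_0,t}g+D^\alpha_{t_0,t}[P_\beta-P_\alpha]$ with $g=f-P_\beta\in RL^{\beta}([t_0,t_1];X)$ and invoke Lemma \ref{202502271758} in every case; the observation you correctly isolate as the crux --- that in the configuration $\alpha\not\in\mathbb{N}$, $\beta\in\mathbb{N}$ the singular boundary sum in the lemma vanishes because $g^{(j)}(t_0)=0$ for all $j\le\beta-1$ --- is exactly why the Caputo embedding survives where the Riemann--Liouville one fails, and it buys you a uniform identity (in effect $cD^\alpha_{t_0,t}f=J^{\beta-\alpha}_{t_0,t}\big[cD^\beta_{t_0,t}f\big]+D^\alpha_{t_0,t}[P_\beta-P_\alpha]$) without any chaining through an intermediate integer space. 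Two minor points deserve a line each in a polished write-up: in the second case of Lemma \ref{202502271758} the identity holds only on $(t_0,t_1]$, so you should note that, since the derivative in \eqref{202502231625} is taken in the weak sense, agreement a.e.\ with the continuous function $J^{\beta-\alpha}_{t_0,t}g^{(\beta)}$ already yields $cD^\alpha_{t_0,t}f\in C^0([t_0,t_1];X)$ (the paper sidesteps this by using Proposition \ref{auxkilbas1}$(iii)$ directly); and in your strictness argument, which replaces the paper's Mittag--Leffler witness by the more elementary $f(t)=(t-t_0)^\gamma$ with $\gamma\in(\alpha,\beta)\setminus\mathbb{N}$, the subcase $[\beta]-1<\gamma$ forces $\beta-\gamma\in(0,1)$, so $\Gamma(\gamma-\beta+1)$ has no pole and the coefficient is genuinely nonzero --- the degenerate possibility $\beta-\gamma\in\mathbb{N}^*$, where $D^\beta_{t_0,t}(t-t_0)^\gamma\equiv 0$, is automatically absorbed into your first subcase, where the base regularity $f\in C^{[\beta]-1}$ already fails, so your dichotomy is airtight even though you did not flag this explicitly.
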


\begin{proof} Let us divide this proof into three cases.
\begin{itemize}
\item[(i)] For $\alpha,\beta\in\mathbb{N}$ or $\alpha\in\mathbb{N}$ and $\beta\not\in\mathbb{N}$, the proof follows directly from the definition.\vspace*{0.2cm}

\item[(ii)] For $\alpha\not\in\mathbb{N}$ and $\beta\in\mathbb{N}$, consider $f\in C^{\beta}([t_0,t_1];X)$. Since $[\alpha]\leq\beta$, by item $(iii)$ of Proposition \ref{auxkilbas1},
$$
J_{t_0,t}^{[\alpha]-\alpha} \left[f(t)-\sum_{j=0}^{[\alpha]-1}\frac{f^{(j)}(t_0)}{j!}(t-t_0)^{j}\right]\in C^{[\alpha]}([t_0,t_1];X).
$$
Therefore, $f\in C^{\alpha}([t_0,t_1];X)$. Moreover, item $(iii)$ of Proposition \ref{auxkilbas1} ensures that
\begin{multline*}
\qquad\qquad\|cD^\alpha_{t_0,t}f\|_{C^{0}([t_0,t_1];X)}\\\leq\left\|J_{t_0,t}^{[\alpha]-\alpha}\left[\frac{d^{[\alpha]}}{dt^{[\alpha]}}\left(f-\sum_{j=0}^{[\alpha]-1}\frac{f^{(j)}(t_0)}{j!}(\,\cdot-t_0)^{j}\right)\right]\right\|_{C^{0}([t_0,t_1];X)}.
\end{multline*}

Thus, by item $(i)$ of Remark \ref{eqre01}, we obtain
\begin{multline*}
  \qquad\qquad\|f\|_{C^{\alpha}([t_0,t_1];X)}=\|f\|_{C^{[\alpha]-1}([t_0,t_1];X)}+\|cD^\alpha_{t_0,t}f\|_{C^{0}([t_0,t_1];X)}
    \\\leq \max\{1,M\}\|f\|_{C^{\beta}([t_0,t_1];X)},
\end{multline*}
for some constant $M>0$.\vspace*{0.2cm}

\item[(iii)] If $\alpha,\beta\not\in\mathbb{N}$, let $f\in C^{\beta}([t_0,t_1];X)$. We divide into two subcases:\vspace*{0.2cm}
\begin{itemize}
\item[(a)] If there exists $m\in \mathbb{N}^*$ such that $m-1<\alpha<\beta<m$, consider the function $g:[t_0,t_1]\rightarrow X$ given by
\begin{equation}\label{eqrl1}
g(t):=f(t)-\sum_{j=0}^{m-1}\frac{f^{(j)}(t_0)}{j!}(t-t_0)^{j}.
\end{equation}

By definition, we have
\begin{equation}\label{Ccont}
  D_{t_0,t}^\beta g(t)=cD_{t_0,t}^\beta f(t),
\end{equation}
for every $t\in[t_0,t_1]$. Since $f\in C^{\beta}([t_0,t_1];X)$, we obtain that function $g\in RL^{\beta}([t_0,t_1];X)$. Thus, Theorem \ref{finalinclusionRL} ensures that $g\in RL^{\alpha}([t_0,t_1];X)$. Finally, equation \eqref{eqrl1} guarantees that $f\in C^{\alpha}([t_0,t_1];X)$. By \eqref{Ccont} and Theorem \ref{finalinclusionRL}, we obtain the continuity of the inclusion.\vspace*{0.2cm}

\item[(b)] If there exists $m\in \mathbb{N}^*$ such that $\alpha<m<\beta$, then item $(i)$ ensures that $C^{\beta}([t_0,t_1];X)$ is continuously embedded in $C^{m}([t_0,t_1];X)$, while item $(ii)$ ensures that $C^{m}([t_0,t_1];X)$ is continuously embedded in $C^{\alpha}([t_0,t_1];X)$. Thus, we deduce that $C^{\beta}([t_0,t_1];X)\subset C^{\alpha}([t_0,t_1];X)$ continuously.
\end{itemize}
\end{itemize}

Finally, the strictness follows from the fact that (see \cite{GoKiMaRo1,HaMaSa1} for details)
$$
cD^{\beta}_{t_0,t}E_\alpha\big((t-t_0)^\alpha\big)=(t-t_0)^{\alpha-\beta}E_{\alpha,1+\alpha-\beta}\big((t-t_0)^\alpha\big).
$$
In other words, $E_\alpha\big((t-t_0)^\alpha\big)$ belongs to $C^\beta\big([t_0,t_1],\mathbb{R}\big)$ for every $\beta\leq\alpha$, but it does not belong to $C^\beta\big([t_0,t_1],\mathbb{R}\big)$ when $\beta>\alpha$, since it is singular at $t=t_0$.

\end{proof}

We end this section by providing the following scheme of inclusions:

\begin{remark}  For $\alpha,\beta\in(0,\infty)$ with $[\alpha]<\beta$, the inclusions established in Theorems \ref{finalinclusionRL} and \ref{finalinclusionC} can be structured as follows:
$$
C^\beta\big([t_0,t_1];X\big)\subset C^{[\beta]-1}\big([t_0,t_1];X\big)\subset C^{[\alpha]}\big([t_0,t_1];X\big)\subset C^\alpha\big([t_0,t_1];X\big),
$$
and
$$
RL^\beta\big([t_0,t_1];X\big)\subset C_0^{[\beta]-1}\big([t_0,t_1];X\big)\subset C_0^{[\alpha]}\big([t_0,t_1];X\big)\subset RL^\alpha\big([t_0,t_1];X\big),
$$
where, for $m\in\mathbb{N}$,
\begin{equation*}
C_0^{m}([t_0,t_1];X):=\Big\{f\in C^{m}\big([t_0,t_1];X\big) \mid f^{(j)}(t_0)=0,\,\, \forall j\in\{0,1,\ldots,m\}\Big\}.
\end{equation*}

\end{remark}

%%%%%%%%%%%%%%%%%%%%%%%%%%%%%%%%%%%%%%%%%%%%%%%%%%%%%%%%%%%%%%%%%%%%%%%%%%%%%%%%%%%%%%%%%%%%%%%%%%%%%%%%%%%%%%%%%%%%%%%%%%%%%%%%%%%%%%%%%%%%%%%%%%%%%%%%%%%%%%%%%%%%%%%%%%%%%%%%

\section{Properties of Fractional Continuously Differentiable Functions}\label{sec4}

In this section, we investigate the relationship between the spaces $RL^{\alpha}\big([t_0,t_1];X)$, $C^{\alpha}\big([t_0,t_1];X)$ and the Hölder spaces. Additionally, we discuss the conditions under which these spaces constitute Banach algebras.

\subsection{Fractional Continuously Differentiable Functions and H\"{o}lder Continuity}

We begin by recalling the definition of Hölder continuity.

\begin{definition} If $n\in\mathbb{N}$ and $\gamma\in(0,1)$, we define $H^{n,\gamma}(I;X)$ as the space of functions $f:I\rightarrow X$ that are $n$-times continuously differentiable and satisfy
\begin{equation*}
    \big\|f^{(n)}(t)-f^{(n)}(s)\big\|_X\leq M{|t-s|^\gamma},
\end{equation*}
for every $t,s\in[t_0,t_1]$ and some constant $M>0$. If $I$ is compact, equipping this space with the norm
\begin{equation*}
    \|f\|_{H^{n,\gamma}(I;X)} := \|f\|_{C^{n}(I;X)} + [\,f^{(n)}\,]_{H^{0,\gamma}(I;X)},
\end{equation*}
where
\begin{equation*}
    [\,f\,]_{H^{0,\gamma}(I;X)} := \sup_{t,s\in I,\,\,t\not=s} \left[\dfrac{\big\|f(t)-f(s)\big\|_X}{|t-s|^\gamma}\right],
\end{equation*}
makes $H^{n,\gamma}(I;X)$ a Banach space. Here, $f^{(j)}(t)$ denotes the standard $j$-th derivative of $f(t)$. These functions are called Hölder continuously differentiable functions of order $n$ with exponent $\gamma$. Moreover, we define
\begin{equation*}
    H^{n,\gamma}_{t_0}([t_0,t_1],X) := \big\{f\in H^{n,\gamma}([t_0,t_1],X) \mid f^{(i)}(t_0) = 0, \forall i\in\{0,\ldots,n\} \big\}.
\end{equation*}
\end{definition}

In their seminal 1928 work \cite{HaLi1}, Hardy and Littlewood established foundational theorems on the boundedness of the Riemann–Liouville fractional integral acting on $L^p$ spaces. Below, we present the result from their study that is most relevant to our analysis, using our notation.

\begin{theorem}[{\cite[Theorem 19]{HaLi1}}]
  Assume that $0<\alpha<\beta\leq 1$. Then $H^{0,\beta}_{t_0}([t_0,t_1],\mathbb{R})\subset RL^{\alpha}([t_0,t_1],\mathbb{R})$.
\end{theorem}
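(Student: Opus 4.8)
The plan is to reduce the claim to showing that the Riemann--Liouville derivative $D^\alpha_{t_0,t}f=\frac{d}{dt}\big[J^{1-\alpha}_{t_0,t}f\big]$ exists and is continuous on $[t_0,t_1]$; since $0<\alpha<1$ forces $[\alpha]=1$, membership of $f$ in $RL^\alpha([t_0,t_1],\mathbb{R})$ amounts exactly to this together with $f\in C^0$, which is automatic from $f\in H^{0,\beta}_{t_0}$. Writing $g:=J^{1-\alpha}_{t_0,t}f$, item $(ii)$ of Remark \ref{eqre01} already gives $g\in C^0([t_0,t_1],\mathbb{R})$, and item $(i)$ of Theorem \ref{auxkilbas2} gives $g(t_0)=0$. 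The three hypotheses I will actually exploit are $f(t_0)=0$, the bound $|f(t)-f(s)|\le M|t-s|^\beta$, and the strict inequality $\beta>\alpha$.

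First I would introduce the candidate derivative in Marchaud form,
\begin{equation*}
h(t):=\frac{1}{\Gamma(1-\alpha)}\left[\frac{f(t)}{(t-t_0)^\alpha}+\alpha\int_{t_0}^{t}\frac{f(t)-f(s)}{(t-s)^{1+\alpha}}\,ds\right],\qquad t\in(t_0,t_1],
\end{equation*}
together with $h(t_0):=0$. Using $|f(t)-f(s)|\le M|t-s|^\beta$, the integrand is dominated by $M(t-s)^{\beta-1-\alpha}$ with $\beta-1-\alpha>-1$, so the integral converges and defines a bounded function; a standard splitting of the integral (separating a neighbourhood of the moving singularity $s=t$ from the rest) together with dominated convergence shows $h$ is continuous on $(t_0,t_1]$. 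Continuity at $t_0$ follows because $f(t_0)=0$ turns the leading term into $f(t)/(t-t_0)^\alpha=(f(t)-f(t_0))/(t-t_0)^\alpha$, which is $O\big((t-t_0)^{\beta-\alpha}\big)\to 0$, while the integral term is controlled similarly; hence $h\in C^0([t_0,t_1],\mathbb{R})$.

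The crux is to prove that $h$ is genuinely the derivative of $g$. I would do this by regularisation: for small $\epsilon>0$ set $g_\epsilon(t):=\frac{1}{\Gamma(1-\alpha)}\int_{t_0}^{t-\epsilon}(t-s)^{-\alpha}f(s)\,ds$, which is $C^1$ since the singularity has been cut off. Differentiating, and rewriting $f(s)=[f(s)-f(t)]+f(t)$ to tame the now non-integrable kernel $(t-s)^{-\alpha-1}$, produces
\begin{equation*}
g_\epsilon'(t)=\frac{1}{\Gamma(1-\alpha)}\left[\epsilon^{-\alpha}\big(f(t-\epsilon)-f(t)\big)+\alpha\int_{t_0}^{t-\epsilon}\frac{f(t)-f(s)}{(t-s)^{1+\alpha}}\,ds+\frac{f(t)}{(t-t_0)^\alpha}\right].
\end{equation*}
Here the Hölder bound gives $|\epsilon^{-\alpha}(f(t-\epsilon)-f(t))|\le M\epsilon^{\beta-\alpha}\to 0$ precisely because $\beta>\alpha$, and dominated convergence handles the integral, so $g_\epsilon'\to h$ pointwise as $\epsilon\to 0$. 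Showing this convergence is uniform on each $[t_0+\eta,t_1]$ (again via the Hölder bound) lets me pass the derivative through the limit and conclude that $g$ is continuously differentiable on $(t_0,t_1]$ with $g'=h$.

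Finally I would upgrade this to the closed interval: since $g(t_0)=0$, $g$ is continuous on $[t_0,t_1]$ and differentiable on $(t_0,t_1]$ with $g'=h$, and $h$ extends continuously to $t_0$, so a mean-value argument gives $g'(t_0)=h(t_0)=0$ and $g\in C^1([t_0,t_1],\mathbb{R})$. Therefore $D^\alpha_{t_0,t}f=g'=h\in C^0([t_0,t_1],\mathbb{R})$, which is exactly the statement $f\in RL^\alpha([t_0,t_1],\mathbb{R})$. I expect the main obstacle to be the regularisation step: justifying the termwise differentiation of $g_\epsilon$, correctly isolating the boundary contribution $\epsilon^{-\alpha}(f(t-\epsilon)-f(t))$, and proving that the convergence $g_\epsilon'\to h$ is uniform — this is where the condition $\beta>\alpha$ (and not merely $\alpha<1$) is indispensable.
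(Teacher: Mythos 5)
Your proposal is correct, and it reaches the paper's key identity by a genuinely different mechanism. The paper (Theorem \ref{casenzero}) computes $D^{\alpha}_{t_0,t}f$ directly from the one-sided difference quotients of $J^{1-\alpha}_{t_0,t}f$: adding and subtracting $f(t+\varepsilon)$ splits the quotient into three terms $A_\varepsilon$, $B_\varepsilon$, $C_\varepsilon$, whose limits (Hölder bound, L'Hôpital, dominated convergence) produce exactly your Marchaud-type function $h$ --- identity \eqref{202503251405}, whose kernel is $(t-s)^{-\alpha-1}$ as derived in \eqref{derhol4}, matching your $h$ after rewriting $-\alpha[f(s)-f(t)]=\alpha[f(t)-f(s)]$ --- with the right- and left-hand derivatives treated separately and the value at $t=t_0$ then \emph{assigned} by inspecting the formula. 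You instead regularize: you differentiate the truncated integral $g_\epsilon$, isolate the boundary contribution $\epsilon^{-\alpha}\big(f(t-\epsilon)-f(t)\big)=O(\epsilon^{\beta-\alpha})$, and note that $g_\epsilon' \to h$ uniformly (your tail estimate $\alpha\int_{t-\epsilon}^{t}(t-s)^{\beta-\alpha-1}\,ds=O(\epsilon^{\beta-\alpha})$ is in fact uniform on all of $(t_0,t_1]$, not just on $[t_0+\eta,t_1]$), so the classical theorem on uniform convergence of derivatives (\cite[Theorem 7.17]{Ru1}, which the paper itself invokes in Theorem \ref{auxaux1}) yields $g'=h$. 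This buys three things over the paper's route: the two one-sided derivatives need not be analyzed separately; continuity of $D^{\alpha}_{t_0,t}f$ is automatic, since $h$ is proved continuous beforehand rather than read off the identity a posteriori; and the endpoint is genuinely settled --- your mean-value step shows $J^{1-\alpha}_{t_0,t}f$ is actually differentiable at $t_0$ with derivative $h(t_0)=0$, where the paper merely defines $D^{\alpha}_{t_0,t}f(t)\big|_{t=t_0}=0$. It also covers $\beta=1$ directly, as in the Hardy--Littlewood statement, whereas Theorem \ref{casenzero} is phrased for $\beta<1$. The one trade-off is that your mean-value argument at $t_0$ is scalar, while the paper's difference-quotient computation works verbatim for Banach-space-valued $f$; for the statement as given (over $\mathbb{R}$) this costs nothing, and in the vector-valued setting it is repaired by writing $g(t)=\int_{t_0}^{t}h(s)\,ds$ via the fundamental theorem of calculus instead of invoking the mean value theorem.
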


We provide a nontrivial proof that not only clarifies the structure of the result but also extends it to the general case where $X$ is a Banach space. This is achieved through a key characterization of the fractional derivative for Hölder continuous functions, from which the continuity of the inclusion follows directly.

\begin{theorem}\label{casenzero}
Let $f \in H^{0,\beta}_{t_0}([t_0,t_1];X)$ with $0 < \alpha < \beta < 1$. Then, the fractional derivative $D^{\alpha}_{t_0,t}f(t)$ exists for all $t \in [t_0,t_1]$ and satisfies
\begin{equation}\label{202503251405}
    D^{\alpha}_{t_0,t}f(t) = \frac{-\alpha}{\Gamma(1-\alpha)} \int_{t_0}^{t} (t-s)^{\alpha-1}[f(s)-f(t)]\,ds + \frac{(t-t_0)^{-\alpha}f(t)}{\Gamma(1-\alpha)},
\end{equation}
for every $t \in (t_0,t_1]$, while for $t = t_0$, both sides vanish. Consequently, the inclusion
\begin{equation}\label{202503231008}
H^{0,\beta}_{t_0}([t_0,t_1];X) \subsetneq RL^{\alpha}([t_0,t_1];X)
\end{equation}
holds and is continuous.

\end{theorem}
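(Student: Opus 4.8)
The plan is to work in the regime $[\alpha]=1$, where by definition $D^{\alpha}_{t_0,t}f=\frac{d}{dt}g$ with $g:=J^{1-\alpha}_{t_0,t}f$ (the derivative taken weakly). Since $f$ is continuous, Remark~\ref{eqre01}(ii) gives $g\in C^0([t_0,t_1];X)$, and Theorem~\ref{auxkilbas2}(i) (with $m=1$) gives $g(t_0)=0$. Thus it suffices to exhibit a continuous function $\phi$ on $[t_0,t_1]$ with $g(t)=\int_{t_0}^{t}\phi(\tau)\,d\tau$: then $g\in C^1$, the weak derivative equals $\phi$, formula~\eqref{202503251405} holds, and $f\in RL^{\alpha}([t_0,t_1];X)$.

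The obstacle is that $f$ is only Hölder continuous, so one cannot differentiate $g$ by differentiating $f$ under the integral sign. I would instead truncate the singularity: for $\varepsilon>0$ set $g_{\varepsilon}(t):=\frac{1}{\Gamma(1-\alpha)}\int_{t_0}^{t-\varepsilon}(t-s)^{-\alpha}f(s)\,ds$, which is classically differentiable because the kernel is no longer singular on the domain of integration. Its derivative is a boundary term $\varepsilon^{-\alpha}f(t-\varepsilon)$ plus $-\alpha\int_{t_0}^{t-\varepsilon}(t-s)^{-\alpha-1}f(s)\,ds$. Writing $f(s)=[f(s)-f(t)]+f(t)$ and using $\int_{t_0}^{t-\varepsilon}(t-s)^{-\alpha-1}\,ds=\alpha^{-1}[\varepsilon^{-\alpha}-(t-t_0)^{-\alpha}]$, the two $\varepsilon^{-\alpha}$ contributions collapse into the single term $\varepsilon^{-\alpha}[f(t-\varepsilon)-f(t)]$, leaving the convergent singular integral $-\alpha\int_{t_0}^{t-\varepsilon}(t-s)^{-\alpha-1}[f(s)-f(t)]\,ds$ together with $(t-t_0)^{-\alpha}f(t)$, i.e. precisely the shape of~\eqref{202503251405}.

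Pushing $\varepsilon\to0^{+}$ is the main technical step, and the Hölder seminorm $[f]_{H^{0,\beta}}$ is exactly what makes it go through. The estimate $\|f(t-\varepsilon)-f(t)\|_{X}\le[f]_{H^{0,\beta}}\varepsilon^{\beta}$ makes the combined boundary term $O(\varepsilon^{\beta-\alpha})$, hence vanishing uniformly in $t$ (this is where $\beta>\alpha$ is used), while $\|f(s)-f(t)\|_{X}\le[f]_{H^{0,\beta}}(t-s)^{\beta}$ dominates the integrand by $[f]_{H^{0,\beta}}(t-s)^{\beta-\alpha-1}$, which is integrable precisely because $\beta-\alpha>0$. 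This yields $g_{\varepsilon}'\to\phi$ uniformly on compact subsets of $(t_0,t_1]$, together with $g_{\varepsilon}\to g$ uniformly (controlled by $\int_{t-\varepsilon}^{t}(t-s)^{-\alpha}\,ds=\varepsilon^{1-\alpha}/(1-\alpha)$), so $g'=\phi$ there. A change of variables fixing the limits of integration shows $\phi$ is continuous on $(t_0,t_1]$; and since $f(t_0)=0$ forces $\|f(t)\|_X\le[f]_{H^{0,\beta}}(t-t_0)^{\beta}$, both terms of $\phi$ are $O((t-t_0)^{\beta-\alpha})$, so $\phi$ extends continuously with $\phi(t_0)=0$, matching the claim that both sides of~\eqref{202503251405} vanish at $t_0$. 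With $g(t_0)=0$ this gives $g\in C^1([t_0,t_1];X)$ and $D^{\alpha}_{t_0,t}f=\phi$.

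For the continuous embedding~\eqref{202503231008}, formula~\eqref{202503251405} gives $\|D^{\alpha}_{t_0,t}f\|_{C^0}\le C[f]_{H^{0,\beta}}$, and $f(t_0)=0$ gives $\|f\|_{C^0}\le[f]_{H^{0,\beta}}(t_1-t_0)^{\beta}$; summing these yields $\|f\|_{RL^{\alpha}}\le M\|f\|_{H^{0,\beta}}$. Strictness follows from a single example: $f(t)=(t-t_0)^{\alpha}$ (times a fixed nonzero vector in the Banach-valued case) satisfies $f(t_0)=0$ and $D^{\alpha}_{t_0,t}f\equiv\Gamma(\alpha+1)\in C^0$, so $f\in RL^{\alpha}$, yet near $t_0$ it is Hölder of exponent exactly $\alpha<\beta$ and hence fails to lie in $H^{0,\beta}_{t_0}([t_0,t_1];X)$.
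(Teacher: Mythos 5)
Your proof is correct, and it reaches \eqref{202503251405} by a genuinely different mechanism than the paper. The paper computes the one-sided difference quotients of $J^{1-\alpha}_{t_0,t}f$ directly: adding and subtracting $f(t+\varepsilon)$ splits the quotient into three terms $A_\varepsilon$, $B_\varepsilon$, $C_\varepsilon$, each handled pointwise in $t$ (the singular one via the Dominated Convergence Theorem), with the right- and left-hand derivatives treated separately. You instead truncate the kernel at $s=t-\varepsilon$, differentiate the regularized integral $g_\varepsilon$ classically, exploit the cancellation of the two $\varepsilon^{-\alpha}$ contributions into the single boundary term $\varepsilon^{-\alpha}[f(t-\varepsilon)-f(t)]=O(\varepsilon^{\beta-\alpha})$, and pass to the limit through uniform convergence of the pair $(g_\varepsilon,g_\varepsilon')$, invoking the classical theorem on limits of derivatives (\cite[Theorem 7.17]{Ru1} --- which the paper itself uses elsewhere, in Theorem \ref{auxaux1}). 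Both routes consume exactly the same two Hölder estimates, and use $\beta>\alpha$ at the same two places (vanishing of the boundary term, integrability of $(t-s)^{\beta-\alpha-1}$). What your route buys: the left/right derivative dichotomy disappears; the uniform-in-$t$ bounds deliver continuity of $D^{\alpha}_{t_0,t}f$ on all of $[t_0,t_1]$, and hence the continuous embedding, as an automatic by-product, whereas the paper must read continuity off from the identity after the pointwise limits are established; and you get genuine (one-sided) differentiability of $J^{1-\alpha}_{t_0,t}f$ at $t=t_0$ with derivative $0$, slightly cleaner than the paper's convention of \emph{defining} $D^{\alpha}_{t_0,t}f(t_0)=0$ from the vanishing of the right-hand side. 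Your embedding estimate and strictness example ($f(t)=(t-t_0)^{\alpha}$, constant RL derivative $\Gamma(\alpha+1)$, Hölder exponent exactly $\alpha<\beta$ at $t_0$) coincide with the paper's. One remark: your singular integral carries the kernel $(t-s)^{-\alpha-1}$, which is what the paper's own derivation produces (see the definition of $F(s)$ and \eqref{derhol4}); the exponent $(t-s)^{\alpha-1}$ printed in \eqref{202503251405} is evidently a typo, so your version of the formula is the correct one.
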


\begin{proof}
Recall that, by definition,
$$D^{\alpha}_{t_0,t}f(t) := \dfrac{d}{dt}\Big[J_{t_0,t}^{1-\alpha}f(t)\Big],$$
for every $t \in [t_0,t_1]$ where the derivative on the right-hand side exists. 

In what follows, we prove the existence of $D^{\alpha}_{t_0,t}f(t)$ directly from the limit definition of the derivative applied to $J_{t_0,t}^{1-\alpha}f(t)$. Specifically, we show that, for each $t \in (t_0,t_1)$, the left and right limits exist and coincide, and that the left-hand limit exists at $t = t_1$. In both cases, the resulting limits satisfy identity \eqref{202503251405}.

(i) At first, let us fix $t \in (t_0,t_1)$. Observe that
\begin{multline}\label{202503221036}
  \left(\dfrac{d}{dt}\right)^+J_{t_0,t}^{1-\alpha}f(t)=\lim_{\varepsilon \rightarrow 0^+} \frac{1}{\varepsilon \Gamma(1-\alpha)} \Bigg[\int_{0}^{t+\varepsilon} (t+\varepsilon-s)^{-\alpha}f(s)\,ds \\
  -\int_{0}^{t} (t-s)^{-\alpha}f(s)\,ds\Bigg] ,
\end{multline}
if the limit exists. By adding $\pm f(t+\varepsilon)$ in each integral and reorganizing, we obtain
\begin{multline}\label{202503221036}
  \left(\dfrac{d}{dt}\right)^+J_{t_0,t}^{1-\alpha}f(t) = \lim_{\varepsilon \rightarrow 0^+} \Bigg\{\underbrace{ \frac{1}{\varepsilon \Gamma(1-\alpha)} \int_{t}^{t+\varepsilon} (t+\varepsilon-s)^{-\alpha}\big[f(s) - f(t+\varepsilon)\big]\,ds }_{=:A_\varepsilon(t)}\\
  \qquad+ \underbrace{ \frac{1}{\varepsilon \Gamma(1-\alpha)} \int_{t_0}^{t} \left[ (t+\varepsilon-s)^{-\alpha} - (t-s)^{-\alpha} \right] \big[f(s) - f(t+\varepsilon)\big]\,ds }_{=:B_\varepsilon(t)} \\
    +\underbrace{ \frac{f(t+\varepsilon)}{\varepsilon \Gamma(1-\alpha)} \left[ \int_{t_0}^{t+\varepsilon} (t+\varepsilon-s)^{-\alpha}\,ds - \int_{t_0}^{t} (t-s)^{-\alpha}\,ds \right] }_{=:C_\varepsilon(t)} \Bigg\}.
\end{multline}

We now analyze the limit of each term in \eqref{202503221036}. Since $f \in H^{0,\beta}_{t_0}([t_0,t_1];X)$ and $\varepsilon>0$, we have
\begin{equation*}
  \|A_\varepsilon(t)\|_X \leq \frac{[\,f\,]_{H^{0,\beta}([t_0,t_1];X)}}{\varepsilon \Gamma(1-\alpha)} \int_{t}^{t+\varepsilon} (t+\varepsilon-s)^{\beta - \alpha}\,ds = \frac{\varepsilon^{\beta - \alpha} [\,f\,]_{H^{0,\beta}([t_0,t_1];X)}}{\Gamma(1-\alpha)(\beta - \alpha + 1)},
\end{equation*}
so that
\begin{equation}\label{derhol2}\lim_{\varepsilon \rightarrow 0^+} A_\varepsilon(t) = 0.\end{equation}

Next, observe that
\[
C_\varepsilon(t) = \left\{ \frac{(t+\varepsilon - t_0)^{1-\alpha} - (t - t_0)^{1-\alpha}}{\varepsilon \Gamma(2-\alpha)} \right\} f(t+\varepsilon),
\]
for every $\varepsilon>0$, and by L'Hôpital’s Rule,
\begin{equation}\label{derhol3}
  \lim_{\varepsilon \rightarrow 0^+} C_\varepsilon(t) = \frac{(t - t_0)^{-\alpha} f(t)}{\Gamma(1-\alpha)}.
\end{equation}

To analyze $B_\varepsilon(t)$, define
$$
\rho_\varepsilon(s) := \frac{[(t+\varepsilon-s)^{-\alpha} - (t-s)^{-\alpha}][f(s) - f(t+\varepsilon)]}{\varepsilon},
$$
for every $s\in[t_0,t)$. By L’Hôpital's Rule, for each fixed $s \in [t_0,t)$, we have
$$
\lim_{\varepsilon \rightarrow 0^+} \frac{(t+\varepsilon-s)^{-\alpha} - (t-s)^{-\alpha}}{\varepsilon} = -\alpha(t-s)^{-\alpha - 1},
$$
and therefore $\lim_{\varepsilon \rightarrow 0^+} \rho_\varepsilon(s) = -\alpha(t-s)^{-\alpha - 1}[f(s) - f(t)] := F(s),$ for each fixed $s \in [t_0,t)$.

Now observe that, since $f \in H^{0,\beta}_{t_0}([t_0,t_1];X)$, we obtain the following estimate for each fixed $s \in [t_0, t)$:
\begin{multline*}
  \|\rho_\varepsilon(s)\|_X\leq \left[\frac{[\,f\,]_{H^{0,\beta}([t_0,t_1];X)}}{\varepsilon}\right]\Big|\big[(t+\varepsilon-s)^{-\alpha}-(t-s)^{-\alpha}\big]\Big|(t+\varepsilon-s)^{\beta} \\
  =\left[\frac{[\,f\,]_{H^{0,\beta}([t_0,t_1];X)}}{\varepsilon}\right]\left\{\Big[(t-s)^{-\alpha}-(t+\varepsilon-s)^{-\alpha}\Big](t+\varepsilon-s)^{\beta}\pm (t-s)^{\beta-\alpha} \right\},
\end{multline*}
what implies that
\begin{multline*}
  \|\rho_\varepsilon(s)\|_X\leq\left[\frac{[\,f\,]_{H^{0,\beta}([t_0,t_1];X)}}{\varepsilon}\right]\left\{(t-s)^{-\alpha}\Big[(t+\varepsilon-s)^{\beta}-(t-s)^{\beta}\Big]\right.\\+ (t-s)^{\beta-\alpha} -(t+\varepsilon-s)^{\beta-\alpha}\Big\}.
\end{multline*}

Finally, since $(t-s)^{\beta-\alpha} -(t+\varepsilon-s)^{\beta-\alpha}\leq 0$ and
\begin{multline*}
  \frac{(t+\varepsilon-s)^{\beta}-(t-s)^{\beta}}{\varepsilon}=\frac{1}{\varepsilon}\int_{0}^{\varepsilon}\frac{d}{dw}(t+w-s)^\beta dw=\frac{\beta}{\varepsilon}\int_{0}^{\varepsilon}(t+w-s)^{\beta-1} dw\\
  \leq \frac{\beta}{\varepsilon}(t-s)^{\beta-1}\varepsilon= \beta(t-s)^{\beta-1},
\end{multline*}
we deduce that
 $$\|\rho_\varepsilon(s)\|_X\leq[\,f\,]_{H^{0,\beta}([t_0,t_1];X)}\beta(t-s)^{\beta-\alpha}.$$

Given that $g(s) := [\,f\,]_{H^{0,\beta}([t_0,t_1];X)}\beta (t-s)^{\beta - \alpha - 1} \in L^1(t_0,t;\mathbb{R})$ (recall that $\beta > \alpha$), we can apply the Dominated Convergence Theorem to obtain
\begin{equation}\label{derhol4}
  \lim_{\varepsilon \rightarrow 0^+}B_\varepsilon(t)=\lim_{\varepsilon \rightarrow 0^+} \int_{t_0}^{t} \rho_\varepsilon(s)\,ds = \int_{t_0}^{t} F(s)\,ds = -\alpha \int_{t_0}^{t} (t-s)^{-\alpha - 1}[f(s) - f(t)]\,ds.
\end{equation}

Combining \eqref{derhol2}, \eqref{derhol3}, and \eqref{derhol4}, we conclude that $D^{\alpha}_{t_0,t}f(t)$ exists for $t \in (t_0,t_1]$ and satisfies \eqref{202503251405}.\vspace*{0.2cm}

(ii) Now, for $t \in (t_0, t_1]$, we consider the left-hand derivative of $J_{t_0,t}^{1-\alpha}f(t)$, given by
\begin{multline*}
  \left(\dfrac{d}{dt}\right)^-J_{t_0,t}^{1-\alpha}f(t)=\lim_{\varepsilon \rightarrow 0^-} \frac{1}{\varepsilon \Gamma(1-\alpha)} \Bigg[\int_{t_0}^{t+\varepsilon} (t+\varepsilon-s)^{-\alpha}f(s)\,ds \\
  -\int_{t_0}^{t} (t-s)^{-\alpha}f(s)\,ds\Bigg],
\end{multline*}
provided the limit exists. Proceeding with analogous arguments to those used in the case of the right-hand derivative, we conclude that the left-hand derivative exists. Consequently, the RL fractional derivative of order $\alpha$ exists for every $t \in (t_0, t_1]$.\vspace*{0.2cm}

(iii) It follows directly from the H\"{o}lder regularity of $f$ that the right-hand side of identity \eqref{202503251405} vanishes at $t = t_0$. This justifies defining $D^{\alpha}_{t_0,t}f(t)\big|_{t = t_0} = 0$, thus completing the first part of the proof. Furthermore, the continuous inclusion in \eqref{202503231008} follows immediately from identity \eqref{202503251405}.

To finish the proof, we now demonstrate that the inclusion \eqref{202503231008} is strict. Consider the particular case $X = \mathbb{R}$ and define $f(t) = t^{\alpha}$. A direct computation yields $D_t^{\alpha} f(t) = \Gamma(\alpha + 1)$, showing that $f \in RL^{\alpha}([0,1];\mathbb{R})$. Besides, it is clear that $f \notin H^{0,\beta}_{t_0}([0,1];\mathbb{R})$.

\end{proof}

As a consequence of this result, we obtain the general case.

\begin{corollary}\label{holder}
   Let $0<\alpha<\beta\leq 1$ and $n\in\mathbb{N}$. Then $H^{n,\beta}_{t_0}([t_0,t_1],X)\subsetneq RL^{n+\alpha}([t_0,t_1],X)$ continuously.
\end{corollary}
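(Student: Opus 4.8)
The plan is to reduce the statement to the base case $n=0$ already settled in Theorem \ref{casenzero}, by transferring $n$ classical derivatives through the fractional integral. First I would note that, since $0<\alpha<1$, we have $[n+\alpha]=n+1$, so membership in $RL^{n+\alpha}([t_0,t_1];X)$ amounts to requiring $f\in C^{n}([t_0,t_1];X)$ (which is immediate, since $f\in H^{n,\beta}_{t_0}$ is by definition $n$-times continuously differentiable) together with $D^{n+\alpha}_{t_0,t}f\in C^{0}([t_0,t_1];X)$. The whole difficulty thus concentrates on the existence and continuity of $D^{n+\alpha}_{t_0,t}f$.

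The key step is the identity $D^{n+\alpha}_{t_0,t}f(t)=D^{\alpha}_{t_0,t}f^{(n)}(t)$. To obtain it, I would write $D^{n+\alpha}_{t_0,t}f=\frac{d^{n+1}}{dt^{n+1}}\big[J^{1-\alpha}_{t_0,t}f\big]$ straight from the definition, and then invoke item $(iii)$ of Proposition \ref{auxkilbas1} with integration order $1-\alpha$ and $m=n$: since $f\in C^{n}\subset W^{n,1}$ and $f^{(j)}(t_0)=0$ for all $j\in\{0,\dots,n-1\}$ (these vanishing conditions are built into the definition of $H^{n,\beta}_{t_0}$), one gets $\frac{d^{n}}{dt^{n}}\big[J^{1-\alpha}_{t_0,t}f\big]=J^{1-\alpha}_{t_0,t}f^{(n)}$. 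Differentiating once more yields $D^{n+\alpha}_{t_0,t}f=\frac{d}{dt}\big[J^{1-\alpha}_{t_0,t}f^{(n)}\big]=D^{\alpha}_{t_0,t}f^{(n)}$.

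Now, $f\in H^{n,\beta}_{t_0}$ forces $f^{(n)}\in H^{0,\beta}_{t_0}([t_0,t_1];X)$, so Theorem \ref{casenzero} guarantees that $D^{\alpha}_{t_0,t}f^{(n)}$ exists, is continuous on $[t_0,t_1]$, and that the inclusion $H^{0,\beta}_{t_0}\subset RL^{\alpha}$ is continuous. Together with the identity above, this gives $D^{n+\alpha}_{t_0,t}f\in C^{0}$, hence $f\in RL^{n+\alpha}$. For the norm estimate I would bound $\|f\|_{RL^{n+\alpha}}=\|f\|_{C^{n}}+\|D^{\alpha}_{t_0,t}f^{(n)}\|_{C^{0}}$ using $\|f\|_{C^{n}}\leq\|f\|_{H^{n,\beta}}$ and the continuity constant from Theorem \ref{casenzero} applied to $f^{(n)}$, combined with $\|f^{(n)}\|_{H^{0,\beta}}\leq\|f\|_{H^{n,\beta}}$. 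Strictness follows from the example $f(t)=(t-t_0)^{n+\alpha}$, whose fractional derivative $D^{n+\alpha}_{t_0,t}f$ is constant (so $f\in RL^{n+\alpha}$), while $f^{(n)}$ is a multiple of $(t-t_0)^{\alpha}$, which is Hölder of order $\alpha<\beta$ but not of order $\beta$ near $t_0$, so $f\notin H^{n,\beta}_{t_0}$.

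The main obstacle I anticipate is making the transfer-of-derivatives identity fully rigorous: the derivative $d^{n+1}/dt^{n+1}$ appearing in the definition of $D^{n+\alpha}_{t_0,t}f$ is understood weakly, whereas Theorem \ref{casenzero} produces a classical pointwise derivative. I would reconcile this by observing that Proposition \ref{auxkilbas1}$(iii)$ operates at the level of weak derivatives, so $D^{n+\alpha}_{t_0,t}f$ coincides, as a weak derivative, with $D^{\alpha}_{t_0,t}f^{(n)}$, and the latter admits the continuous representative furnished by formula \eqref{202503251405}; that continuous representative is precisely what membership in $RL^{n+\alpha}$ demands.
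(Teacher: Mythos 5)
Your proposal is correct and follows essentially the same route as the paper: both reduce to the base case via the identity $D^{n+\alpha}_{t_0,t}f=\frac{d}{dt}\big[J^{1-\alpha}_{t_0,t}f^{(n)}\big]=D^{\alpha}_{t_0,t}f^{(n)}$, obtained from item $(iii)$ of Proposition \ref{auxkilbas1} using the vanishing conditions built into $H^{n,\beta}_{t_0}$, and then invoke Theorem \ref{casenzero} for $f^{(n)}\in H^{0,\beta}_{t_0}([t_0,t_1];X)$. Your additional details (the explicit norm bound, the strictness example $f(t)=(t-t_0)^{n+\alpha}$, and the reconciliation of weak versus classical derivatives) are sound and merely flesh out steps the paper leaves implicit.
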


\begin{proof}
  Let $f\in H^{n,\beta}_{t_0}([t_0,t_1],X)$. To show that $f\in RL^{n+\alpha}([t_0,t_1],X)$, we need only to prove that $D^{n+\alpha}_{t_0,t}f(t)\in C^0([t_0,t_1],X)$. Note that, by definition of $H^{n,\beta}_{t_0}([t_0,t_1],X)$ and item $(iii)$ of Proposition \ref{auxkilbas1},
  $$D^{n+\alpha}_{t_0,t}f(t)=\dfrac{d^{n+1}}{dt^{n+1}}J^{1-\alpha}_{t_0,t}f(t)=\dfrac{d}{dt}\Big[J^{1-\alpha}_{t_0,t}f^{(n)}(t)\Big].$$
   Since $f^{(n)}\in H^{0,\beta}_{t_0}([t_0,t_1];X)$, the result follows by reducing to the case $n=0$, which has already been fully addressed in Theorem \ref{casenzero}.

\end{proof}

We now apply the results established above to study the relationship between the spaces $H^{n,\beta}([t_0,t_1],X)$ and $C^{n+\alpha}([t_0,t_1],X)$.

\begin{corollary}\label{holder2}
  Let $0<\alpha<\beta\leq 1$ and $n\in\mathbb{N}$. Then $H^{n,\beta}([t_0,t_1],X)\subsetneq C^{n+\alpha}([t_0,t_1],X)$ continuously.
\end{corollary}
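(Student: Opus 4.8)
The plan is to reduce the statement to the Riemann--Liouville inclusion of Corollary \ref{holder} by exploiting the defining relation between the Caputo and RL derivatives. Given $f \in H^{n,\beta}([t_0,t_1];X)$, recall that since $\alpha \in (0,1)$ we have $[n+\alpha] = n+1$, so the Caputo derivative of order $n+\alpha$ subtracts the Taylor polynomial of degree $n$. Accordingly, I would set
\[
g(t) := f(t) - \sum_{j=0}^{n} \frac{f^{(j)}(t_0)}{j!}(t-t_0)^j,
\]
so that, directly from the definition \eqref{202502231626}, $cD^{n+\alpha}_{t_0,t} f(t) = D^{n+\alpha}_{t_0,t} g(t)$ for every $t$ where the right-hand side exists. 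The first task is to check that $g \in H^{n,\beta}_{t_0}([t_0,t_1];X)$: differentiating the polynomial gives $g^{(n)}(t) = f^{(n)}(t) - f^{(n)}(t_0)$, which is $\beta$-Hölder (a constant shift leaves the seminorm unchanged), and $g^{(i)}(t_0)=0$ for every $i \in \{0,\dots,n\}$.

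With this in hand, Corollary \ref{holder} yields $g \in RL^{n+\alpha}([t_0,t_1];X)$, so that $D^{n+\alpha}_{t_0,t} g \in C^0([t_0,t_1];X)$. Since $cD^{n+\alpha}_{t_0,t} f = D^{n+\alpha}_{t_0,t} g$, we conclude $cD^{n+\alpha}_{t_0,t} f \in C^0([t_0,t_1];X)$, that is $f \in C^{n+\alpha}([t_0,t_1];X)$, establishing the inclusion.

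For the continuity of the embedding, I would estimate
\[
\|f\|_{C^{n+\alpha}([t_0,t_1];X)} = \|f\|_{C^{n}([t_0,t_1];X)} + \|D^{n+\alpha}_{t_0,t} g\|_{C^0([t_0,t_1];X)}.
\]
The first term is dominated by $\|f\|_{H^{n,\beta}([t_0,t_1];X)}$. For the second, Corollary \ref{holder} gives $\|D^{n+\alpha}_{t_0,t} g\|_{C^0} \leq \|g\|_{RL^{n+\alpha}} \leq M'\|g\|_{H^{n,\beta}_{t_0}}$, and it remains to bound $\|g\|_{H^{n,\beta}_{t_0}}$ by a multiple of $\|f\|_{H^{n,\beta}}$: the seminorm $[g^{(n)}]_{H^{0,\beta}} = [f^{(n)}]_{H^{0,\beta}}$ is unchanged, while the subtracted Taylor polynomial has $C^n$-norm controlled by $\|f\|_{C^n}$ with a constant depending only on $n$, $t_0$, $t_1$. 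Combining these estimates yields the desired continuity bound for a suitable $M>0$.

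Finally, strictness follows from a power-function example: on $[t_0,t_1]$ the function $f(t) = (t-t_0)^{n+\alpha}$ has $n$-th derivative proportional to $(t-t_0)^\alpha$, which is exactly $\alpha$-Hölder and hence not $\beta$-Hölder, so $f \notin H^{n,\beta}$; yet, since $f^{(j)}(t_0)=0$ for $j\in\{0,\dots,n\}$, a direct computation (as in Theorem \ref{casenzero}) shows its Caputo derivative of order $n+\alpha$ equals the constant $\Gamma(n+\alpha+1)$, so $f \in C^{n+\alpha}$. I expect the only delicate point to be the bookkeeping in the continuity estimate, specifically controlling the norm of the subtracted Taylor polynomial; but this is routine, as the entire argument is structurally parallel to the Caputo reductions already carried out in Corollary \ref{coro} and in case (iii)(a) of Theorem \ref{finalinclusionC}.
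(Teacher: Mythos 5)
Your proposal is correct and follows essentially the same route as the paper: subtract the degree-$n$ Taylor polynomial at $t_0$ to form $g$, observe that $g\in H^{n,\beta}_{t_0}([t_0,t_1];X)$ with $cD^{n+\alpha}_{t_0,t}f=D^{n+\alpha}_{t_0,t}g$, and invoke Corollary \ref{holder}. You additionally spell out the continuity estimate (bounding $\|g\|_{H^{n,\beta}_{t_0}}$ by a multiple of $\|f\|_{H^{n,\beta}}$) and supply the explicit strictness example $f(t)=(t-t_0)^{n+\alpha}$, details the paper leaves implicit; both are correct.
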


\begin{proof}
  Let $f\in H^{n,\beta}([t_0,t_1],X)$. To show that $f\in C^{n+\alpha}([t_0,t_1],X)$, we need only to prove that %
  $$cD^{n+\alpha}_{t_0,t}f(t)=D_{t_0,t}^\alpha\left[f(t)-\sum_{k=0}^{n}\frac{f^{(k)}(t_0)}{k!}\big(t-t_0\big)^{k}\right]\in C^0([t_0,t_1],X).$$
  To this end, define
  $$g(t)=f(t)-\sum_{k=0}^{n}\frac{f^{(k)}(t_0)}{k!}\big(t-t_0\big)^{k},$$
  and note that $g\in H^{n,\beta}_{t_0}([t_0,t_1],X)$.

  Since
  $cD^{n+\alpha}_{t_0,t}f(t)=D^{n+\alpha}_{t_0,t}g(t),$
  by applying Corollary \ref{holder} to function $g$ we obtain the result.
\end{proof}

\begin{remark}\label{remm}
\begin{itemize}
\item[(i)] Note that Theorem \ref{casenzero} and Corollaries \ref{holder} and \ref{holder2} do not hold in the limiting case $\alpha = \beta$. To illustrate this, let $\alpha > 0$ and suppose the domain is $[t_0, t_1]$. Define $\tilde{t} = (t_0 + t_1)/2$, and consider the function $f(t)$ given by
$f(t) = 0$ on $[t_0, \tilde{t})$, and $f(t) = 1$ on $[\tilde{t}, t_1]$.

A straightforward computation shows that
$$J^\alpha_{t_0,t} f(t) = \left\{\begin{array}{ll}0,&\textrm{ for }t \in [t_0, \tilde{t}),\vspace*{0.2cm}\\
\dfrac{1}{\Gamma(\alpha+1)}(t - \tilde{t})^{\alpha},&\textrm{ for }t \in [\tilde{t},t_1].\end{array}\right.$$

Then, by applying {\color{blue}\cite[Theorem 9]{CarFe4}}, we conclude that
$$J^\alpha_{t_0,t} f \in H^{[\alpha]-1,\alpha+1-[\alpha]}_{t_0}([t_0, t_1], \mathbb{R}).$$
However, a direct computation shows that $D^\alpha_{t_0,t} J^\alpha_{t_0,t} f(t) = f(t)$ for almost every $t \in [t_0, t_1]$, and $f$ has no representative in $C^0([t_0, t_1], \mathbb{R})$, which proves that the inclusion in Theorem \ref{casenzero} is strict. The corollaries follow directly from this conclusion.\vspace*{0.2cm}

\item[(ii)] For the case $\alpha \in (0,1)$, we also refer to the well-known counterexample originally presented by Hardy and Littlewood in \cite[Remark 5.8]{HaLi1} (cf. \cite{Ha1,KoGa1,RoStSa1}). There, they consider $\sigma \in (1,\infty)$ and define the Weierstrass function $W_\alpha:\mathbb{R} \to \mathbb{R}$ by
$$
W_\alpha(t) = \sum_{j=0}^{\infty} \sigma^{-j\alpha} \cos{(\sigma^j t)},
$$
which they observe to be H\"{o}lder continuous of order $\alpha$, yet lacking a fractional derivative of order $\alpha$ at any point.

By defining $\omega_\alpha:[t_0,t_1] \to \mathbb{R}$ as
$$
\omega_\alpha(t) := W_\alpha(t) - W_\alpha(t_0),
$$
we obtain a function satisfying $\omega_\alpha \in H^{0,\alpha}_{t_0}([t_0,t_1];X)$ that also does not admit a fractional derivative of order $\alpha$ at any point. This construction thus provides an example illustrating that $H^{0,\alpha}_{t_0}([t_0,t_1];X) \not\subset RL^{\alpha}([t_0,t_1];X)$. Nevertheless, we emphasize that this example is significantly stronger than what is strictly necessary to establish such a counterexample (cf. item $(i)$ above).

\end{itemize}
\end{remark}

To complete the analysis of the relationship between these spaces, we now consider the remaining case, which, to the best of our knowledge, has not been addressed in any known reference.

\begin{theorem}\label{HRL}
  Let $0<\alpha\leq 1$ and $n\in\mathbb{N}$. Then $RL^{n+\alpha}([t_0,t_1],X)\subsetneq H^{n,\alpha}_{t_0}([t_0,t_1],X)$ continuously.
\end{theorem}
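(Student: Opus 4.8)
The plan is to reduce the statement to the base case $n=0$ and then establish the single substantive fact that the Riemann--Liouville fractional integral of a continuous function gains exactly $\alpha$ orders of Hölder regularity. Throughout I take $\alpha\in(0,1)$, so that $n+\alpha\notin\mathbb{N}$ and $[n+\alpha]-1=n$. Given $f\in RL^{n+\alpha}([t_0,t_1];X)$, for $n\geq 1$ item $(ii)$ of Remark \ref{eqre02111} (with $j=n$) yields $f^{(n)}\in RL^{\alpha}([t_0,t_1];X)$ together with the identity $D^{n+\alpha}_{t_0,t}f=D^{\alpha}_{t_0,t}f^{(n)}\in C^0([t_0,t_1];X)$, while for $n=0$ this is immediate. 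Moreover, Proposition \ref{nulo} guarantees $f^{(j)}(t_0)=0$ for every $j\in\{0,\dots,n\}$, so the boundary conditions defining $H^{n,\alpha}_{t_0}$ hold automatically. It therefore remains only to prove that $g:=f^{(n)}$ is $\alpha$-Hölder continuous, with its seminorm controlled by $\|D^{\alpha}_{t_0,t}g\|_{C^0([t_0,t_1];X)}=\|D^{n+\alpha}_{t_0,t}f\|_{C^0([t_0,t_1];X)}$.

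For the base case, let $g\in RL^{\alpha}([t_0,t_1];X)$ and set $h:=D^{\alpha}_{t_0,t}g\in C^0([t_0,t_1];X)$. Since $\frac{d}{dt}J^{1-\alpha}_{t_0,t}g=h$ is continuous, we have $J^{1-\alpha}_{t_0,t}g\in C^1([t_0,t_1];X)$, so Theorem \ref{auxkilbas2}$(iii)$ (with $m=1$) applies and gives the representation $g(t)=J^{\alpha}_{t_0,t}h(t)$ for every $t\in[t_0,t_1]$. The heart of the argument is then the classical Hardy--Littlewood estimate: for $t_0\leq\tau<t\leq t_1$ I would split
\[
\Gamma(\alpha)\big[g(t)-g(\tau)\big]=\int_\tau^t (t-s)^{\alpha-1}h(s)\,ds+\int_{t_0}^\tau\big[(t-s)^{\alpha-1}-(\tau-s)^{\alpha-1}\big]h(s)\,ds,
\]
bound the first integral by $\|h\|_{C^0([t_0,t_1];X)}(t-\tau)^\alpha/\alpha$, and, using that $\alpha-1<0$ makes the bracket in the second integral nonpositive, bound the second by $\|h\|_{C^0([t_0,t_1];X)}\big[(t-\tau)^\alpha+(\tau-t_0)^\alpha-(t-t_0)^\alpha\big]/\alpha\leq\|h\|_{C^0([t_0,t_1];X)}(t-\tau)^\alpha/\alpha$. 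This yields $[\,g\,]_{H^{0,\alpha}([t_0,t_1];X)}\leq(2/\Gamma(\alpha+1))\,\|h\|_{C^0([t_0,t_1];X)}$, giving both the membership $g\in H^{0,\alpha}$ and the quantitative control needed for continuity.

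Combining the two steps, and recalling $\|f\|_{RL^{n+\alpha}([t_0,t_1];X)}=\|f\|_{C^{n}([t_0,t_1];X)}+\|D^{n+\alpha}_{t_0,t}f\|_{C^0([t_0,t_1];X)}$, I obtain
\[
\|f\|_{H^{n,\alpha}_{t_0}([t_0,t_1];X)}=\|f\|_{C^{n}([t_0,t_1];X)}+[\,f^{(n)}\,]_{H^{0,\alpha}([t_0,t_1];X)}\leq\Big(1+\tfrac{2}{\Gamma(\alpha+1)}\Big)\|f\|_{RL^{n+\alpha}([t_0,t_1];X)},
\]
which is precisely the continuity of the inclusion with $M=1+2/\Gamma(\alpha+1)$.

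For the strictness it suffices to exhibit one element of $H^{n,\alpha}_{t_0}$ lying outside $RL^{n+\alpha}$: taking the Weierstrass-type function $\omega_\alpha\in H^{0,\alpha}_{t_0}([t_0,t_1];\mathbb{R})$ from item $(ii)$ of Remark \ref{remm}, which admits no Riemann--Liouville derivative of order $\alpha$ at any point, and integrating it $n$ times produces a function in $H^{n,\alpha}_{t_0}\setminus RL^{n+\alpha}$. I expect the main obstacle to be the Hölder estimate in the second paragraph: verifying that the fractional integral gains exactly $\alpha$ orders of regularity requires the careful sign analysis of $(t-s)^{\alpha-1}-(\tau-s)^{\alpha-1}$ and the telescoping of the resulting boundary powers. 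A secondary but essential point is the justification of the representation $g=J^{\alpha}_{t_0,t}[D^{\alpha}_{t_0,t}g]$ via Theorem \ref{auxkilbas2}$(iii)$, whose $C^1$-hypothesis on $J^{1-\alpha}_{t_0,t}g$ is exactly where the continuity (rather than mere boundedness) of $D^{\alpha}_{t_0,t}g$ is used.
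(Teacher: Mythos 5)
Your proposal is correct and follows essentially the same route as the paper: the representation $f^{(n)}(t)=J^{\alpha}_{t_0,t}\big[D^{n+\alpha}_{t_0,t}f(t)\big]$ obtained from Theorem \ref{auxkilbas2}$(iii)$, then the same two-integral split with the sign analysis of $(x-s)^{\alpha-1}-(y-s)^{\alpha-1}$ and the telescoping of boundary powers, yielding the identical seminorm bound $2\|D^{n+\alpha}_{t_0,\cdot}f\|_{C^0([t_0,t_1];X)}/\Gamma(\alpha+1)$ (your reduction to $n=0$ via Remark \ref{eqre02111}$(ii)$ is only a cosmetic reordering of the paper's direct computation at order $n+\alpha$). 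The only divergences are minor: for strictness you use the Weierstrass-type function of Remark \ref{remm}$(ii)$ where the paper cites the simpler step-function example of Remark \ref{remm}$(i)$, and your explicit restriction to $\alpha\in(0,1)$ matches the paper's effective scope anyway, since its proof likewise relies on Proposition \ref{nulo} and Theorem \ref{auxkilbas2}, which require $n+\alpha\notin\mathbb{N}$.
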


\begin{proof} Let $f \in RL^{n+\alpha}([t_0,t_1],X)$. From item $(iii)$ of Proposition \ref{auxkilbas1} and Theorem \ref{auxkilbas2}, it follows that
$$
J^{\alpha}_{t_0,t} D^{n+\alpha}_{t_0,t} f(t) = J^{\alpha}_{t_0,t} \left\{ \dfrac{d^{n+1}}{dt^{n+1}} \left[ J_{t_0,t}^{1-\alpha} f(t) \right] \right\} = f^{(n)}(t),
$$
for every $t \in (t_0,t_1]$. Moreover, Theorem \ref{auxkilbas2} and Proposition \ref{nulo} also guarantee that
$$
J^{\alpha}_{t_0,t} D^{n+\alpha}_{t_0,t} f(t)\big|_{t=t_0} = f^{(n)}(t_0) = 0.
$$

Now, fix $x,y \in (t_0,t_1]$ (the case $x = t_0$ or $y = t_0$ is analogous) and assume, without loss of generality, that $x > y$ (the case $x = y$ is straightforward). Then, we compute:
\begin{multline}\label{202503231538}
\|f^{(n)}(x) - f^{(n)}(y)\|_X = \left\| J^{\alpha}_{t_0,x} \big[D^{n+\alpha}_{t_0,x} f(x)\big] - J^{\alpha}_{t_0,y} \big[D^{n+\alpha}_{t_0,y} f(y)\big] \right\|_X \\
\qquad\qquad\qquad= \frac{1}{\Gamma(\alpha)} \left\| \int_{t_0}^{y} \big[(x-s)^{\alpha-1} - (y-s)^{\alpha-1} \big] D^{n+\alpha}_{t_0,s} f(s) \, ds\right.\\\left. + \int_{y}^{x} (x-s)^{\alpha-1} D^{n+\alpha}_{t_0,s} f(s) \, ds \right\|_X.
\end{multline}

Since $x > y$, we have $(x-s)^{\alpha-1} < (y-s)^{\alpha-1}$ for all $s \in [t_0,y]$. Therefore, from \eqref{202503231538} we obtain:
\begin{equation}\label{202503231600}
\|f^{(n)}(x) - f^{(n)}(y)\|_X \leq \left[ \frac{\|D^{n+\alpha}_{t_0,\cdot} f\|_{C^0([t_0,t_1];X)}}{\Gamma(\alpha+1)} \right] \left[ 2(x - y)^{\alpha} + (y - t_0)^{\alpha} - (x - t_0)^{\alpha} \right].
\end{equation}

To simplify this estimate, define the auxiliary function $g:[y, t_1] \to \mathbb{R}$ by
$$
g(w) = \left\{ \begin{array}{ll}
\dfrac{2(w - y)^{\alpha} + (y - t_0)^{\alpha} - (w - t_0)^{\alpha}}{(w - y)^{\alpha}}, & \text{if } w > y, \vspace*{0.2cm}\\
2, & \text{if } w = y.
\end{array} \right.
$$
A direct computation shows that $g(w) \leq 2$ for all $w \in [y, t_1]$. Hence, from \eqref{202503231600} it follows that
\begin{equation*}
\frac{\|f^{(n)}(x) - f^{(n)}(y)\|_X}{|x - y|^{\alpha}} \leq \frac{2 \|D^{n+\alpha}_{t_0,\cdot} f\|_{C^0([t_0,t_1];X)}}{\Gamma(\alpha+1)}.
\end{equation*}

This proves that $f^{(n)}(t)$ is Hölder continuous of order $\alpha$ on $[t_0,t_1]$, and thus the inclusion holds. The continuity is an immediate consequence of the above inequality. Finally, the proper inclusion follows from the same example given at item $(i)$ of Remark \ref{remm}.
\end{proof}

\begin{corollary}\label{HRL2}
  Let $0<\alpha\leq 1$ and $n\in\mathbb{N}$. Then $C^{n+\alpha}([t_0,t_1],X)\subset H^{n,\alpha}([t_0,t_1],X)$.
\end{corollary}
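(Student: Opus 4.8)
The plan is to deduce this from Theorem \ref{HRL} by the same Taylor-polynomial decomposition used in the proof of Corollary \ref{holder2}. The key observation is that a generic $f \in C^{n+\alpha}([t_0,t_1],X)$ need not lie in $RL^{n+\alpha}([t_0,t_1],X)$, since its derivatives $f^{(j)}(t_0)$ for $j \in \{0,\dots,n\}$ may fail to vanish; Theorem \ref{HRL} therefore cannot be applied to $f$ directly. The remedy is to subtract off precisely the Taylor polynomial appearing in the definition of the Caputo derivative.

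Concretely, given $f \in C^{n+\alpha}([t_0,t_1],X)$, I would set
$$g(t) := f(t) - \sum_{k=0}^{n} \frac{f^{(k)}(t_0)}{k!}(t-t_0)^{k}.$$
Since $[n+\alpha]-1 = n$ for $\alpha \in (0,1]$, the definition of the Caputo derivative gives $cD^{n+\alpha}_{t_0,t}f(t) = D^{n+\alpha}_{t_0,t}g(t)$, so $g \in C^{n}([t_0,t_1],X)$ has a continuous RL derivative of order $n+\alpha$; moreover $g^{(k)}(t_0) = 0$ for every $k \in \{0,\dots,n\}$. Hence $g \in RL^{n+\alpha}([t_0,t_1],X)$, and Theorem \ref{HRL} yields $g \in H^{n,\alpha}_{t_0}([t_0,t_1],X) \subset H^{n,\alpha}([t_0,t_1],X)$.

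It then remains only to observe that the subtracted polynomial is harmless. Writing $P(t) = \sum_{k=0}^{n}\frac{f^{(k)}(t_0)}{k!}(t-t_0)^{k}$, one has $P^{(n)}(t) = f^{(n)}(t_0)$ constant, so $P^{(n)}$ is trivially Hölder of order $\alpha$ and $P \in H^{n,\alpha}([t_0,t_1],X)$. Since $H^{n,\alpha}([t_0,t_1],X)$ is a vector space and $f = g + P$, I conclude $f \in H^{n,\alpha}([t_0,t_1],X)$, as claimed.

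I do not expect a genuine obstacle here: the only subtlety is the one noted above, namely that one must pass through the shifted function $g$ rather than applying Theorem \ref{HRL} to $f$ itself. Everything else is bookkeeping — verifying the identity $cD^{n+\alpha}_{t_0,t}f = D^{n+\alpha}_{t_0,t}g$ and the vanishing of the derivatives of $g$ at $t_0$, both of which are immediate from the definitions. If desired, the inclusion can also be seen to be continuous, since the Hölder seminorm of $g$ is controlled by $\|D^{n+\alpha}_{t_0,\cdot}f\|_{C^0([t_0,t_1];X)}$ through Theorem \ref{HRL} while the norm of $P$ is controlled by $\|f\|_{C^n([t_0,t_1];X)}$, though the stated corollary only asserts the set inclusion.
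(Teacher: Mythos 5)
Your proposal is correct and follows essentially the same route as the paper: subtract the Caputo Taylor polynomial to form $g(t)=f(t)-\sum_{k=0}^{n}\frac{f^{(k)}(t_0)}{k!}(t-t_0)^{k}$, observe $cD^{n+\alpha}_{t_0,t}f=D^{n+\alpha}_{t_0,t}g$ so that $g\in RL^{n+\alpha}([t_0,t_1],X)$, and apply Theorem \ref{HRL}. If anything, you are slightly more explicit than the paper in writing $f=g+P$ and checking that the polynomial $P$ lies in $H^{n,\alpha}([t_0,t_1],X)$, a step the paper compresses into ``that is, $f\in H^{n,\alpha}([t_0,t_1],X)$.''
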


\begin{proof} Let $f\in C^{n+\alpha}([t_0,t_1],X)$. As argumented in the proof of Corollary \ref{coro}, %
$$g(t):=f(t)-\sum_{k=0}^{n}\frac{f^{(k)}(t_0)}{k!}\big(t-t_0\big)^{k}\in RL^{n+\alpha}([t_0,t_1],X).$$
By Theorem \ref{HRL}, $g\in H^{n,\alpha}_{t_0}([t_0,t_1],X)$, that is, $f\in H^{n,\alpha}([t_0,t_1],X)$. Besides,      by Theorem \ref{auxkilbas2},
\begin{multline*}
  J^{\alpha}_{t_0,t}\big[cD^{\alpha}_{t_0,t}f^{(n)}(t)\big]=J^{\alpha}_{t_0,t}\Big\{D^{\alpha}_{t_0,t}\big[f^{(n)}(t)-f^{(n)}(t_0)\big]\Big\}
  \\=J^{\alpha}_{t_0,t}\left\{D^{\alpha}_{t_0,t}\left[\frac{d^n}{dt^n}g(t)\right]\right\}=J^{\alpha}_{t_0,t}\Big[D^{n+\alpha}_{t_0,t}g(t)\Big].
\end{multline*}

Now, the continuity follows from Theorem \ref{HRL}.

\end{proof}

\begin{remark}
A natural question that arises concerns the characterization of functions the belong to $\bigcap_{\alpha \in (0,1)} C^{\alpha}\big([t_0,t_1];\mathbb{R}\big)$. By the aforementioned results, Theorem \ref{HRL} and Corollaries \ref{holder}, \ref{holder2} and \ref{HRL2}, we have the identity
$$
\bigcap_{\alpha \in (\beta,\gamma)} H^{0,\alpha}([t_0,t_1];X) = \bigcap_{\alpha \in (\beta,\gamma)} C^{\alpha}([t_0,t_1];X),
$$
for every $0\leq\beta<\gamma\leq1$. An interesting consequence is that this space contains functions that do not belong to $C^1([t_0,t_1];X)$. In fact, it is already known that the vector space $\bigcap_{\alpha \in (0,1)} H^{0,\alpha}([t_0,t_1];X)$ may include functions that are not even Lipschitz continuous. The classical Weierstrass function provides an example of such behavior (cf. \cite{Ha1,HaLi1,RoStSa1}).

\end{remark}

\subsection{The spaces $RL^{\alpha}\big([t_0,t_1];X\big)$ and $C^{\alpha}\big([t_0,t_1];X\big)$ as Banach Algebras.}

In this section, we investigate the conditions under which the spaces $RL^{\alpha}\big([t_0,t_1];X\big)$ and $C^{\alpha}\big([t_0,t_1];X\big)$ are Banach algebras. To this end, we begin by recalling the following result established by Alsaedi, Ahmad, and Kirane in \cite[Lemma 1]{AlBaMk1}:
\begin{theorem}\label{eqAAK}
  Let $0<\alpha<1$. Let one of the following conditions be satisfied:
  \begin{itemize}
    \item[(i)] $u\in C^0([0, T];\mathbb{R})$ and $v\in H^{0,\beta}([0, T];\mathbb{R})$, with $\alpha<\beta\leq 1$;\vspace*{0.2cm}
    \item[(ii)] $v\in C^0([0, T];\mathbb{R})$ and $u\in H^{0,\beta}([0, T];\mathbb{R})$, with $\alpha<\beta\leq 1$;\vspace*{0.2cm}
    \item[(iii)] $v\in H^{0,\beta}([0, T];\mathbb{R})$ and $u\in H^{0,\gamma}([0, T];\mathbb{R})$, with $\alpha<\beta+\gamma$, $0<\beta,\gamma<1$.\vspace*{0.2cm}
  \end{itemize}

  Then we have
  \begin{multline}\label{leib1}
    D^{\alpha}_{0,t}(uv)(t)=u(t)D^{\alpha}_{0,t}v(t)+v(t)D^{\alpha}_{0,t}u(t)\\
    -\frac{\alpha}{\Gamma{(1-\alpha)}}\int_{0}^{t}{\frac{\big[u(s)-u(t)\big]\big[v(s)-v(t)\big]}{(t-s)^{\alpha+1}}ds}-\frac{u(t)v(t)}{\Gamma(1-\alpha)t^\alpha},
  \end{multline}
  for every $t\in(0,T]$.
\end{theorem}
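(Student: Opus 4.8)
The plan is to reduce the product rule to the pointwise representation of the Riemann--Liouville derivative obtained in Theorem \ref{casenzero}. The first observation is that, although that theorem is stated for functions in $H^{0,\beta}_{t_0}$, the identity
\[
D^{\alpha}_{0,t} w(t) = \frac{-\alpha}{\Gamma(1-\alpha)} \int_{0}^{t} \frac{w(s)-w(t)}{(t-s)^{\alpha+1}}\,ds + \frac{w(t)}{\Gamma(1-\alpha)\,t^{\alpha}}
\]
never uses the vanishing condition $w(t_0)=0$ in its derivation; that hypothesis is invoked only afterwards, to conclude the inclusion into $RL^\alpha$. Hence the representation is available for any $w$ that is Hölder continuous of some order $\mu>\alpha$, this being exactly the regularity that renders the singular integral absolutely convergent. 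Under each set of hypotheses I will apply it (with $t_0=0$) to $u$, to $v$, and, via the difference-quotient computation described below, to the product $uv$.

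The algebraic core is the elementary bilinear identity
\[
u(s)v(s)-u(t)v(t) = v(t)\big[u(s)-u(t)\big] + u(t)\big[v(s)-v(t)\big] + \big[u(s)-u(t)\big]\big[v(s)-v(t)\big],
\]
inserted into the singular integral defining $D^\alpha_{0,t}(uv)$. After multiplying by $-\alpha/\Gamma(1-\alpha)$, the first two summands match the representations of $D^\alpha_{0,t}u$ and $D^\alpha_{0,t}v$ and thus reproduce $v(t)D^\alpha_{0,t}u(t)$ and $u(t)D^\alpha_{0,t}v(t)$, each accompanied by a boundary term $-u(t)v(t)/(\Gamma(1-\alpha)t^{\alpha})$, while the third summand yields the cross integral of \eqref{leib1}. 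A useful consistency check is the bookkeeping of the boundary contributions: the one coming from the representation of $uv$ ($+1$) together with the two produced by $u$ and $v$ ($-1$ each) collapse to a single $-u(t)v(t)/(\Gamma(1-\alpha)t^{\alpha})$, which is precisely the last term of \eqref{leib1}.

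The technical heart, and the only place where the cases genuinely differ, is the control of the cross integral
\[
\int_{0}^{t} \frac{\big[u(s)-u(t)\big]\big[v(s)-v(t)\big]}{(t-s)^{\alpha+1}}\,ds .
\]
Near $s=t$ its numerator is $O\big((t-s)^{\mu}\big)$, where I take $\mu=\beta$ in cases (i) and (ii)—bounding the merely continuous factor by twice its sup-norm and extracting one Hölder factor—and $\mu=\beta+\gamma$ in case (iii), extracting a Hölder factor from each of $u$ and $v$. The integrand is then $O\big((t-s)^{\mu-\alpha-1}\big)$, which is integrable on $(0,t)$ exactly under the stated thresholds $\beta>\alpha$ and $\beta+\gamma>\alpha$. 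To make the manipulation rigorous I will follow the scheme of Theorem \ref{casenzero}: write $D^\alpha_{0,t}(uv)$ as the limit of the difference quotients of $J_{0,t}^{1-\alpha}(uv)$, insert the bilinear identity already at the level of the $\varepsilon$-quotients, and pass to the limit term by term, the passage inside the cross term being justified by the Dominated Convergence Theorem with the explicit dominating function $(t-s)^{\mu-\alpha-1}$ just found.

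I expect the main obstacle to be this interchange of limit and integral in the cross term, together with the strictly bilinear nature of the cancellation. The correction integral is finite precisely because of the \emph{product} $[u(s)-u(t)][v(s)-v(t)]$; when only one factor is Hölder, as in cases (i) and (ii), the single integral $\int_0^t (t-s)^{-\alpha-1}[u(s)-u(t)]\,ds$ attached to the merely continuous factor need not converge in isolation. The decomposition must therefore be organized around the product at the level of the $\varepsilon$-difference quotients and never performed prematurely factor by factor, the existence of the individual derivatives $D^\alpha_{0,t}u$ and $D^\alpha_{0,t}v$ being understood as part of the hypotheses, while conditions (i)--(iii) are exactly what guarantee the correction integral—and hence the whole right-hand side—to be finite. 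Once the dominating function legitimizes the passage to the limit, collecting the surviving limits and the boundary terms yields \eqref{leib1} for every $t\in(0,T]$.
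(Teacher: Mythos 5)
Your mechanism (bilinear splitting performed at the level of the $\varepsilon$-difference quotients of $J^{1-\alpha}_{0,t}(uv)$, dominated convergence with dominating function $(t-s)^{\mu-\alpha-1}$, and the $+1-2=-1$ bookkeeping of the boundary terms) is sound, and your observation that the representation in Theorem \ref{casenzero} does not use the vanishing condition at $t_0$ for interior points $t\in(t_0,t_1]$ is correct. But there is a decisive gap, located exactly where you write that ``the existence of the individual derivatives $D^{\alpha}_{0,t}u$ and $D^{\alpha}_{0,t}v$ [is] understood as part of the hypotheses'': that assumption is \emph{not} in the statement, and without it the statement is false, so no proof of it as written can succeed. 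Indeed, the paper does not prove Theorem \ref{eqAAK} at all — it quotes it from Alsaedi--Ahmad--Kirane and then immediately refutes it: take $u=\omega_\alpha$, the shifted Weierstrass-type function of item $(ii)$ of Remark \ref{remm}, and $v\equiv 1$. Then condition $(i)$ holds (and condition $(iii)$ holds with $\gamma=\alpha$ and any $\beta>0$), yet $uv=\omega_\alpha$ admits no RL fractional derivative of order $\alpha$ at any point, so neither side of \eqref{leib1} exists. The root of the problem is that your extension of Theorem \ref{casenzero} supplies $D^{\alpha}_{0,t}$ only for Hölder exponents $\mu>\alpha$: in cases $(i)$--$(ii)$ the merely continuous factor has no reason to be fractionally differentiable, and in case $(iii)$ the threshold $\alpha<\beta+\gamma$ permits $\min\{\beta,\gamma\}\le\alpha$, where again neither factor need be differentiable (your own strategy only covers the sub-case $\beta,\gamma>\alpha$ of $(iii)$ without extra hypotheses).

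That said, your attempt is a faithful reconstruction of the \emph{repaired} result: the paper's remedy is precisely to add, in each of $(i)$--$(iii)$, the existence of the relevant RL derivatives at $t\in(0,T]$ as hypotheses, i.e.\ the assumption you introduced tacitly. Under that refinement your scheme goes through, with one technical caveat in cases $(i)$--$(ii)$: to dominate the cross term you cannot simply bound the continuous factor by $2\|u\|_{C^0}$ in the quotient $\rho_\varepsilon$, since $|g(t+\varepsilon)|$ paired with the kernel bound $\alpha(t-s)^{-\alpha-1}$ is not integrable; you need the further splitting $g(s)-g(t+\varepsilon)=[u(s)-u(t+\varepsilon)][v(s)-v(t)]+[u(t+\varepsilon)-u(t)][v(s)-v(t+\varepsilon)]$, after which each piece is dominated by a multiple of $(t-s)^{\beta-\alpha-1}$ and the Dominated Convergence Theorem applies as in the proof of Theorem \ref{casenzero}. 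So the correct framing of your work is not a proof of Theorem \ref{eqAAK} as stated, but a proof sketch of the corrected version that the paper formulates after its counterexamples.
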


We would like to point out that the hypotheses in Theorem \ref{eqAAK}, as currently stated, might not be sufficient to guarantee the validity of identity \eqref{leib1} in all cases. Specific counterexamples indicate that additional conditions may be necessary to ensure the result holds. Below, we present illustrative examples corresponding to each of the items listed in the theorem.
\begin{itemize}
\item[(i)] Consider $u(t) = \omega_\alpha(t)$, where $\omega_\alpha(t)$ is the function defined in item $(ii)$ of Remark \ref{remm}, and let $v(t) = 1$. If item $(i)$ of Theorem \ref{eqAAK} were valid as stated, we could apply it to deduce that their product, which is simply $\omega_\alpha(t)$, possesses a RL fractional derivative. However, we already know that $\omega_\alpha(t)$ does not admit such a derivative.\vspace*{0.2cm}
\item[(ii)] Analogous to the above commentary, just take $v(t)=\omega_\alpha(t)$ and $u(t)=1$.\vspace*{0.2cm}
\item[(iii)] Here there would be no counter example if $\beta,\gamma>\alpha$ (see Theorem \ref{casenzero}), since in this case $D_t^\alpha u\in C^{0}([0, T];\mathbb{R})$ and $D_t^\alpha v\in C^{0}([0, T];\mathbb{R})$. If $\beta\leq\alpha$ or $\gamma\leq\alpha$, then we could repeat the arguments from items $(i)$ or $(ii)$, depending on the situation to obtain the contradiction.
\end{itemize}

Given the preceding discussion, we understand that the statement of Theorem \ref{eqAAK} requires the addition of certain assumptions to ensure the validity of identity \eqref{leib1}. A possible minimal set of complementary hypotheses that guarantees the result is the following:
\begin{itemize}
\item[(i)] $u\in C^0([0, T];\mathbb{R})$, the RL fractional derivative of order $\alpha$ of $u$ exists at $t\in(0,T]$, and $v\in H^{0,\beta}([0, T];\mathbb{R})$, with $\alpha<\beta\leq 1$;\vspace*{0.2cm}
\item[(ii)] $v\in C^0([0, T];\mathbb{R})$, the RL fractional derivative of order $\alpha$ of $v$  exists at $t\in(0,T]$, and $u\in H^{0,\beta}([0, T];\mathbb{R})$, with $\alpha<\beta\leq 1$;\vspace*{0.2cm}
\item[(iii)] $v\in H^{0,\beta}([0, T];\mathbb{R})$, $u\in H^{0,\gamma}([0, T];\mathbb{R})$, with $\alpha<\beta+\gamma$, $0<\beta,\gamma<1$, and the RL fractional derivatives of order $\alpha$ of both $u$ and $v$ exists at $t\in(0,T]$.\vspace*{0.2cm}
\end{itemize}

In addition to the adjustments proposed above to make Theorem \ref{eqAAK} suitable for the applications we develop in the final part of this work, we also consider it important to point out some additional observations about identity \eqref{leib1}.

\begin{remark}
  \begin{enumerate}
    \item[(i)] In the case of the Caputo derivative, we have:
    \begin{multline*}
      cD^{\alpha}_{0,t}(uv)(t) = u(t)\,cD^{\alpha}_{0,t}v(t) + v(t)\,cD^{\alpha}_{0,t}u(t) \\
      - \frac{\alpha}{\Gamma(1-\alpha)} \int_{0}^{t} \frac{\big[u(s) - u(t)\big]\big[v(s) - v(t)\big]}{(t-s)^{\alpha+1}}\,ds
      - \frac{\big[u(t) - u(0)\big]\big[v(t) - v(0)\big]}{\Gamma(1-\alpha)t^\alpha}.
    \end{multline*}

    \item[(ii)] All these identities continue to hold when the fractional derivatives are considered with respect to a general initial point $t_0 \neq 0$, over the interval $[t_0, t_1]$.
  \end{enumerate}
\end{remark}

It is a classical result that the product of two differentiable functions is itself differentiable. To conclude our work, we present the fractional counterpart of this property. Specifically, we prove that the product of two functions, each possessing a continuous RL (or Caputo) fractional derivative, also admits a continuous fractional derivative of the same type.

\begin{theorem}\label{fcoro}
Let $0<\alpha<1$, then $RL^\alpha([t_0,t_1], \mathbb{R})$ and $C^\alpha([t_0,t_1], \mathbb{R})$ are Banach algebras.
\end{theorem}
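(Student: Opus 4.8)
The plan is to prove three things: that both spaces are closed under pointwise multiplication, that the product of two elements again has a \emph{continuous} fractional derivative, and that the norm is submultiplicative up to a constant (which suffices for the Banach algebra structure, since one may rescale the norm to make the constant equal to $1$). Since $0<\alpha<1$, we have $[\alpha]=1$, so membership in either space only requires $f\in C^0([t_0,t_1];\mathbb{R})$ together with continuity of the relevant fractional derivative. The key analytic input will be the corrected Leibniz-type formula \eqref{leib1} (and its Caputo analogue), applied in a regime where all terms are manifestly continuous.

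First I would treat the $RL^\alpha$ case. Given $u,v\in RL^\alpha([t_0,t_1];\mathbb{R})$, Proposition \ref{nulo} forces $u(t_0)=v(t_0)=0$, and Theorem \ref{HRL} (with $n=0$) shows $u,v\in H^{0,\alpha}_{t_0}([t_0,t_1];\mathbb{R})$; in particular both are Hölder of order $\alpha$ and both admit continuous RL derivatives. This places us squarely in the corrected hypothesis (iii) of Theorem \ref{eqAAK} with $\beta=\gamma=\alpha$, so that $2\alpha>\alpha$ and the RL derivatives of $u$ and $v$ exist. Applying \eqref{leib1} (with base point $t_0$), the product $uv$ satisfies
\begin{equation*}
D^{\alpha}_{t_0,t}(uv)(t)=u(t)D^{\alpha}_{t_0,t}v(t)+v(t)D^{\alpha}_{t_0,t}u(t)-\frac{\alpha}{\Gamma(1-\alpha)}\int_{t_0}^{t}\frac{[u(s)-u(t)][v(s)-v(t)]}{(t-s)^{\alpha+1}}\,ds-\frac{u(t)v(t)}{\Gamma(1-\alpha)(t-t_0)^\alpha}.
\end{equation*}
The first two terms are products of continuous functions, hence continuous. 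Since $u(t_0)=v(t_0)=0$ and $u,v$ are Hölder of order $\alpha$, the last term $u(t)v(t)(t-t_0)^{-\alpha}/\Gamma(1-\alpha)$ is bounded by a constant times $(t-t_0)^{\alpha}$ and extends continuously to $t_0$ with value $0$. The main work is to show the integral term defines a continuous function of $t$ on $[t_0,t_1]$ (vanishing at $t_0$): using the bound $|u(s)-u(t)|\,|v(s)-v(t)|\le [u]_{H^{0,\alpha}}[v]_{H^{0,\alpha}}(t-s)^{2\alpha}$, the integrand is dominated by $C(t-s)^{\alpha-1}$, which is integrable, and a dominated-convergence argument gives continuity in $t$ and the value $0$ at $t_0$. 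This establishes $uv\in RL^\alpha$, and reading off the estimate yields $\|uv\|_{RL^\alpha}\le K\|u\|_{RL^\alpha}\|v\|_{RL^\alpha}$ for some $K>0$.

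For the Caputo case, I would reduce to the $RL$ case. Given $u,v\in C^\alpha([t_0,t_1];\mathbb{R})$, Corollary \ref{holder2}/Theorem \ref{HRL} machinery gives $u,v\in H^{0,\alpha}([t_0,t_1];\mathbb{R})$, so both are Hölder of order $\alpha$ and admit continuous Caputo derivatives; subtracting the constants $u(t_0),v(t_0)$ produces functions in $RL^\alpha$, and one applies the Caputo Leibniz identity from the preceding remark. The only structural difference is the final term, now $[u(t)-u(t_0)][v(t)-v(t_0)](t-t_0)^{-\alpha}/\Gamma(1-\alpha)$, which by the same Hölder estimate is $O((t-t_0)^{\alpha})$ and therefore continuous with value $0$ at $t_0$; the integral term is handled identically. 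Thus $cD^\alpha_{t_0,t}(uv)\in C^0$ and submultiplicativity of the norm follows.

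The main obstacle I anticipate is the continuity (not merely boundedness) of the singular integral term as a function of the upper limit $t$, since the kernel $(t-s)^{\alpha-1}$ is singular precisely at $s=t$ and the integrand's dependence on $t$ enters through both the kernel and the arguments $u(t),v(t)$. The clean way around this is to rewrite the term so that the $t$-dependence is isolated and then invoke the continuity properties of the RL fractional integral already recorded in Remark \ref{eqre01}(ii); indeed, the entire right-hand side of the Leibniz formula is, by construction, equal to the RL derivative $D^\alpha_{t_0,t}(uv)$, and establishing that $uv\in RL^\alpha$ is equivalent to showing this derivative is continuous, so one may alternatively argue directly that $J^{1-\alpha}_{t_0,t}(uv)\in C^1$ by differentiating under a regularized integral. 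I would present the dominated-convergence route as the primary argument and remark that it mirrors the estimates already carried out in the proof of Theorem \ref{casenzero}.
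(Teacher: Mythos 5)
Your proposal is correct and takes essentially the same route as the paper's proof: both use Theorem \ref{HRL} to place the factors in $H^{0,\alpha}_{t_0}([t_0,t_1];\mathbb{R})$, invoke item $(iii)$ of the refined version of Theorem \ref{eqAAK}, and conclude continuity of the right-hand side of \eqref{leib1}, with the value $0$ at $t_0$ coming from $f(t_0)=g(t_0)=0$ and the H\"{o}lder bounds. Your extra details --- the dominated-convergence argument for continuity of the singular integral term, the explicit submultiplicative norm estimate, and the explicit Caputo reduction --- merely spell out steps the paper treats as immediate or ``analogous.''
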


\begin{proof} Since the two cases share analogous proofs, we focus only on proving that $RL^\alpha([t_0,t_1], \mathbb{R})$ is a Banach algebra. Let $f, g \in RL^\alpha([t_0,t_1], \mathbb{R})$. As $fg \in C^0([t_0,t_1], \mathbb{R})$, it remains to verify that $D^{\alpha}_{t_0,t}(fg) \in C^0([t_0,t_1], \mathbb{R})$. By Theorem \ref{HRL}, we have that $f, g \in H^{0,\alpha}([t_0, t_1], \mathbb{R})$, and thus we may apply item $(iii)$ of the refined version of Theorem \ref{eqAAK}. This ensures that $D^{\alpha}_{t_0,t}(f(t)g(t))$ exists and satisfies the identity \eqref{leib1} for every $t\in(t_0,t_1]$. Since $f(t_0) = g(t_0) = 0$ and $f,g \in H^{0,\alpha}([t_0, t_1], \mathbb{R})$, we conclude that
$$\lim_{t \to t_0} D_{t_0,t}^\alpha(f(t)g(t)) = 0,$$
proving that the right-hand side of \eqref{leib1} is continuous, which implies that $D^{\alpha}_{t_0,t}(fg) \in C^0([t_0,t_1], \mathbb{R})$.

\end{proof}

Finally, it follows directly from the definition of the vector-valued spaces $RL^\alpha([t_0,t_1], X)$ and  $C^\alpha([t_0,t_1], X)$, from Remark \ref{eqre02111} and Theorem \ref{fcoro}, that the following result also holds true.

\begin{corollary}
  If $\alpha>0$ and $X$ is an Banach algebra, then the spaces $RL^\alpha([t_0,t_1], X)$ and  $C^\alpha([t_0,t_1], X)$ are Banach algebras.
\end{corollary}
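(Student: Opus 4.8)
The plan is to reduce the vector-valued statement to the scalar Banach-algebra result of Theorem \ref{fcoro} by exploiting the product structure of the spaces $RL^\alpha([t_0,t_1],X)$ and $C^\alpha([t_0,t_1],X)$. Let $X$ be a Banach algebra with multiplication $(x,y)\mapsto xy$ satisfying $\|xy\|_X\leq \|x\|_X\|y\|_X$. Given $f,g\in RL^\alpha([t_0,t_1],X)$, the pointwise product $(fg)(t):=f(t)g(t)$ is well defined, and the first task is to show $fg\in RL^\alpha([t_0,t_1],X)$, i.e. that $fg\in C^{[\alpha]-1}([t_0,t_1],X)$ and that $D^\alpha_{t_0,t}(fg)\in C^0([t_0,t_1],X)$.

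First I would treat the case $\alpha\in(0,1)$, which is the genuine content; the integer case is classical and the general non-integer case $\alpha>1$ follows by combining item $(ii)$ of Remark \ref{eqre02111} (which transfers the problem to derivatives $f^{(j)}$ lying in lower-order spaces) with the Leibniz rule for classical derivatives. For $\alpha\in(0,1)$ the key observation is the following scalarization device: a function $h:[t_0,t_1]\to X$ belongs to $RL^\alpha$ if and only if, for every continuous linear functional $x^*\in X^*$ (and, for the algebra structure, for a sufficiently rich family of multiplicative-type test maps), the composition has the right regularity. More directly, since the RL fractional derivative is defined through the Bochner integral $J^{[\alpha]-\alpha}_{t_0,t}$ and a weak derivative, and since bounded linear maps commute with both Bochner integration and weak differentiation, for any $x^*\in X^*$ we have $x^*\bigl(D^\alpha_{t_0,t}h(t)\bigr)=D^\alpha_{t_0,t}\bigl(x^*\circ h\bigr)(t)$. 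The plan is to use Theorem \ref{HRL} to deduce $f,g\in H^{0,\alpha}([t_0,t_1],X)$, establish the vector-valued Leibniz identity \eqref{leib1} directly in $X$ by repeating the proof of the refined Theorem \ref{eqAAK} with the norm $\|\cdot\|_X$ and the submultiplicativity in place of the absolute value, and then read off continuity of $D^\alpha_{t_0,t}(fg)$ from the explicit right-hand side.

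Concretely, the steps in order are: (1) verify $fg\in C^0([t_0,t_1],X)$, which is immediate from continuity of the product on $X$; (2) invoke Theorem \ref{HRL} with $n=0$ to obtain $f,g\in H^{0,\alpha}_{t_0}([t_0,t_1],X)$, so in particular $f(t_0)=g(t_0)=0$ and both are Hölder of exponent $\alpha$; (3) establish that the Leibniz-type formula of the refined Theorem \ref{eqAAK}(iii) holds verbatim in the Banach-algebra setting, where the cross-term integral $\int_{t_0}^t\frac{[f(s)-f(t)][g(s)-g(t)]}{(t-s)^{\alpha+1}}\,ds$ is a convergent Bochner integral because $\|[f(s)-f(t)][g(s)-g(t)]\|_X\leq \|f(s)-f(t)\|_X\,\|g(s)-g(t)\|_X\leq C\,(t-s)^{2\alpha}$, giving an integrand dominated by $(t-s)^{2\alpha-\alpha-1}=(t-s)^{\alpha-1}\in L^1$; (4) observe that each summand on the right-hand side of \eqref{leib1} is continuous in $t$ — the products $f\,D^\alpha g$ and $g\,D^\alpha f$ by submultiplicativity and the assumed continuity of $D^\alpha f,D^\alpha g$, the integral term by dominated convergence, and the boundary term $\frac{f(t)g(t)}{\Gamma(1-\alpha)(t-t_0)^\alpha}$ which, as in the proof of Theorem \ref{fcoro}, tends to $0$ as $t\to t_0^+$ thanks to $f(t_0)=g(t_0)=0$ and the Hölder bound $\|f(t)g(t)\|_X\leq C(t-t_0)^{2\alpha}$; (5) conclude $D^\alpha_{t_0,t}(fg)\in C^0([t_0,t_1],X)$, hence $fg\in RL^\alpha([t_0,t_1],X)$, and finally verify the submultiplicative norm estimate $\|fg\|_{RL^\alpha}\leq M\|f\|_{RL^\alpha}\|g\|_{RL^\alpha}$ by bounding each term of the Leibniz formula in the $C^0$-norm, which yields that $RL^\alpha([t_0,t_1],X)$ is a Banach algebra (possibly after an equivalent renorming). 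The Caputo case $C^\alpha([t_0,t_1],X)$ then follows by the now-familiar subtraction-of-Taylor-polynomial reduction used in Corollaries \ref{coro} and \ref{HRL2}, together with Remark \ref{eqre02111}.

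The main obstacle I anticipate is step (3): verifying that the scalar Leibniz identity of Theorem \ref{eqAAK} genuinely carries over to an arbitrary Banach algebra $X$. The original proof in \cite{AlBaMk1} is written for $\mathbb{R}$-valued functions and implicitly uses commutativity and the ordered structure of $\mathbb{R}$ (for instance in handling sign estimates on $(t-s)^{\alpha-1}$ differences). In the noncommutative vector-valued setting one must check that none of the algebraic rearrangements in deriving \eqref{leib1} relied on commutativity of the product, and that every scalar inequality used for convergence can be replaced by a norm inequality via submultiplicativity; the most delicate point is confirming that the weak-derivative-versus-limit interchange underlying the identity remains valid for Bochner integrals, which should hold because bounded functionals commute with $J^{1-\alpha}_{t_0,t}$ and with weak differentiation, allowing one to verify the identity after applying an arbitrary $x^*\in X^*$ and then invoking the Hahn–Banach theorem to lift it back to $X$. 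Once this interchange is justified, the remaining estimates are routine adaptations of the scalar computation.
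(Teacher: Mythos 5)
Your proposal is correct and takes essentially the same route as the paper: the paper's own (one-line) justification is precisely that the scalar argument of Theorem \ref{fcoro} — Hölder regularity from Theorem \ref{HRL} plus the refined Leibniz identity of Theorem \ref{eqAAK}$(iii)$ — carries over verbatim to Banach-algebra-valued functions because every step is a norm estimate, with general $\alpha>0$ and the Caputo case handled through Remark \ref{eqre02111} and the Taylor-polynomial subtraction, exactly the reductions you spell out. One caveat worth noting: your parenthetical ``if and only if'' scalarization criterion ($h\in RL^{\alpha}$ iff $x^{*}\circ h$ has the right regularity for every $x^{*}\in X^{*}$) is false in a general Banach space (weak differentiability with continuous weak derivatives does not imply strong differentiability, e.g.\ for suitable $c_{0}$-valued functions), but since you never use that direction — the load-bearing step is the direct difference-quotient computation in $X$ with submultiplicativity, as in the proof of Theorem \ref{casenzero} — your proof stands as written.
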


\section*{Acknowledgement}
 The authors were supported by Fundação de Amparo à Pesquisa e Inovação do Espírito Santo (FAPES) under grant number T.O. 951/2023.

\end{document}